\documentclass[final, 12pt]{article}

\usepackage{amsmath}
\usepackage{amsfonts}
\usepackage{makeidx}
\usepackage{amsthm}
\usepackage{mathtools}
\usepackage{amssymb}
\usepackage{bm}
\usepackage{cite}
\usepackage{showkeys}
\usepackage[english]{babel}
\usepackage{dblaccnt}
\usepackage{accents}
\usepackage{graphicx}
\usepackage{psfrag}
\usepackage{subfig}
\usepackage{color}
\usepackage{float}
\usepackage{amscd}
\usepackage[mathscr]{euscript}
\usepackage{lipsum}
\usepackage{epstopdf}
\usepackage{algorithm}
\usepackage{algpseudocode}

\setlength\topmargin{-1cm}
\hoffset=-1cm
\setlength\textheight{21.5cm}
\setlength\textwidth{15.5cm}

\newcommand{\N}{\ensuremath{\mathbb{N}}}

\newcommand{\R}{\ensuremath{\mathbb{R}}}
\newcommand{\C}{\ensuremath{\mathbb{C}}}

\renewcommand{\d}{\mathrm{d}}
\newcommand{\di}{\mathbf{d}}
\renewcommand{\O}{\mathcal{O}}
\newcommand{\ol}{\overline}
\renewcommand{\i}{\mathrm{i}}

\newcommand{\p}{\mathbf{p}}
\newcommand{\q}{\mathbf{q}}

\renewcommand{\Re}{\mathrm{Re}\,}
\renewcommand{\Im}{\mathrm{Im}\,}

\newcommand{\eps}{\varepsilon}
\newcommand{\epsr}{\varepsilon_\mathrm{r}}

\renewcommand{\S}{\mathbb{S}}

\renewcommand{\H}{{\mathbf{H}}}

\newcommand{\E}{{\mathbf{E}}}
\newcommand{\Ei}{{\mathbf{E}_\mathrm{in}}}

\newcommand{\curl}{\mathrm{curl}\,}

\newcommand{\x}{\mathbf{x}}
\newcommand{\y}{\mathbf{y}}
\newcommand{\z}{\mathbf{z}}
\newcommand{\g}{\mathbf{g}}
\newcommand{\f}{\mathbf{f}}
\renewcommand{\u}{\mathbf{u}}
\renewcommand{\v}{\mathbf{v}}

\newtheorem{defi}{Definition}
\newtheorem{lemma}[defi]{Lemma}
\newtheorem{theorem}[defi]{Theorem}

\newtheorem{assumption}[defi]{Assumption}

\begin{document}

\title{Orthogonality Sampling Method for the Electromagnetic Inverse  Scattering Problem}

\author{Isaac Harris\thanks{Department of Mathematics, Purdue University, West Lafayette, IN 47907; (\texttt{harri814@purdue.edu})} \and Dinh-Liem Nguyen\thanks{Department of Mathematics, Kansas State University, Manhattan, KS 66506; (\texttt{dlnguyen@ksu.edu})}
}

\date{}
\maketitle

\begin{abstract}
This paper is concerned with the electromagnetic inverse scattering problem that aims to determine 
the location and shape of  anisotropic  scatterers from far field data (at a fixed frequency). 
We study the orthogonality  sampling method which is a simple, fast  and robust imaging method  for
solving  the electromagnetic inverse shape problem.
We first  provide a theoretical  foundation for the  sampling method and a resolution 
analysis of its imaging functional. We then establish an equivalent relation between the orthogonality 
sampling method and direct sampling method as well as  resolution analysis for the latter.  
The analysis uses the Factorization Method for the far field operator and it plays an  
important role in the justifications along with the Funk-Hecke integral identity.  
Finally, we present some numerical examples to validate 
the performance of the sampling methods for anisotropic scatterers in three dimensions. 
\end{abstract}

\sloppy

{\bf Keywords.}
  orthogonality sampling method,  inverse electromagnetic scattering, direct sampling method, Maxwell's equations, anisotropic media

\bigskip

{\bf AMS subject classification. }
 35R30, 35R09, 65R20

\section{Introduction}
In this paper, we consider the inverse shape problem that is derived from the time-harmonic electromagnetic scattering of an inhomogeneous anisotropic medium. In many physical applications such as non-destructive testing and medical imaging one wishes to infer the shape and/or material properties of the scatterer from measured electromagnetic data. We assume that the far field measurements are known and we wish to analyze two sampling methods for recovering the scatterer. Sampling methods generally fall under the category of qualitative (otherwise known as non-iterative or direct) reconstruction techniques. These methods are advantageous to use since they require little a-prior information to implement and are computationally simple. Qualitative methods have been used to solve multiple inverse shape problems in electromagnetic scattering (see for e.g. \cite{Cakon2011, Kirsc2008, Hadda2014} and the references therein). These methods have also been extended to inverse problems in the time domain. In \cite{Chen2010, Cakon2019} the linear sampling and factorization   methods are extended to  inverse scattering problem problems in the time domain and in \cite{Cakon2017} the MUSIC algorithm is studied for recovering small volume scatterers for the time-dependent acoustic scattering problem.  Here we rigorously analyze both the orthogonality sampling method (OSM) and direct sampling method  (DSM) for recovering a penetrable inhomogeneous anisotropic medium from electromagnetic far field data.

In general, sampling methods allow one to recover the scatterer by connecting the scatterer to the solution of a linear ill-posed equation involving the far field operator. Roughly speaking, the linear sampling method gives that the so-called far field equation $\mathcal{F}g=\phi_{\y_s}$ is only solvable (via a regularization strategy) provided the sampling point is contained in the scatterer. Here the righthand side $\phi_{\y_s}$ is known and depends on the sampling point $\y_s \in \R^3$. This allows one to define an imaging functional that is the reciprocal of the norm of the solution to the far field equation which should only be non-zero as the regularization tends to zero for sampling points in the scatterer. See \cite{Cakon2011} for the analysis of the linear sampling method for the electromagnetic scattering problem. The factorization method gives that $\phi_{\y_s}$ is in the range of a positive self-adjoint compact operator defined by the far field operator if and only if the sampling point is in the scatterer. By appealing to Picard's criteria one can derive an imaging functional using the spectral decomposition of the far field operator see \cite{Kirsc2008}. 

The OSM was first introduced in~\cite{Potth2010} for the inverse acoustic scattering from sound soft scatterers.  Comparing with classical sampling methods the OSM is simpler to implement, can image (small) scatterers with only one incident field, and its stability can be easily justified. However, its mathematical foundation was
only partly known. For the Helmholtz equation case, the method  was rigorously justified in~\cite{Potth2010} for small scatterers and in~\cite{Liu2017} for scatterers with arbitrary shape using multi-static  data.   It was also first  proved in \cite{Liu2017} that the OSM is equivalent to the DSM studied in this cited paper via a remarkable connection to the analysis of the factorization method.   
 Recently, in \cite{Harri2019,Leem2018} this DSM was studied
 in connection to the spectral decomposition of the far field operator.  We also refer to~\cite{Gries2011} for the analysis of a multifrequency OSM. The OSM for Maxwell's equations and its analysis for the case of small scatterers have been recently established in~\cite{Nguye2019}. Motivated by these recent works we study in this paper the OSM for anisotropic Maxwell's equations and provide a  theoretical  foundation of the method for scatterers with arbitrary shape as well as a resolution analysis for its imaging functional. Furthermore, we  establish an equivalent relation between the OSM and the DSM as well as  resolution analysis for the latter. The factorization analysis for the far field operator 
as well as the Funk-Hecke formula play an important role in our theory. We also provide numerical results for three-dimensional anisotropic scatterers  to validate the efficiency of the sampling methods.  

We want to mention that the DSM in~\cite{Liu2017}  was extended to the electromagnetic case 
in~\cite{Alzaa2017}. This extension relies on the Factorization method analysis in~\cite{Kirsc2008}  for an isotropic medium and the decay rate of the imaging functional was not established.  Another DSM which is related to the OSM was studied in~\cite{Ito2013} for small electromagnetic scatterers. We also 
refer to~\cite{Hohag2001, Bao2009, Buhan2017} and references therein for results on  coefficient reconstruction  for the isotropic inverse electromagnetic scattering problem.


We now introduce some basic notations for the paper. 
Let $\O \subset \R^3$ be a domain (connected and open) with Lipschitz boundary, 
we indistinctly denote by $(\cdot,\cdot)$ the inner product of $L^2(\O)$ or $L^2(\O)^{3}$ and by $\|\cdot\|$ the associated norms. 
We further denote
\begin{align*}
H(\curl,\O) &= \big\{\v\in L^2(\O)^3: \curl \v \in L^2(\O)^{3} \big\}, \\   
H_{\mathrm{loc}}(\curl,\R^3) &= \big\{\v:\R^3\rightarrow\C^3: \v|_{B} \in H(\curl,B) \text{ for all ball } B \subset \R^3 \big\},
\end{align*}
where   $H(\curl,\O)$ is equipped by  usual inner product 
\[
(\cdot,\cdot)_{H(\curl,\O)} = (\curl\cdot,\curl\cdot) + (\cdot,\cdot).
\]
For the following sections we will first rigorously formulate the direct and inverse electromagnetic scattering problems under consideration in section 2. 
Section 3 is dedicated to an analysis of the far field operator and the factorization method which is necessary for the study of the OSM. 
We establish the main theoretical results of the paper in section 4. More precisely, we define the imaging functional for the OSM, prove 
its resolution and stability, and an equivalent relation to the imaging functional of the DSM. Lastly, numerical examples are given where we reconstruct bounded anisotropic scatterers in $\R^3$ using the electromagnetic far field data.  We see that the sampling methods are robust reconstruction methods that can recover scatterers of many different shapes and sizes. 

\section{Direct and inverse problem formulation}

\label{sect:FP}

We consider the scattering of time-harmonic electromagnetic waves at  positive frequency
$\omega$ from a non-magnetic  inhomogeneous medium. Suppose that there is no free charge and current density. 
Then, the $H_{\mathrm{loc}}(\curl,\R^3)$ electric field $ \E$ and the magnetic field $\H$ satisfy the Maxwell's equations 
\begin{align}
\label{eq:Maxwell}
\curl \E - i \omega  \mu_0\H = 0 \quad \text{ and } \quad \curl \H +  i\omega \eps \E = 0 \quad \text{in } \R^3.
\end{align}
Here we assume that $\eps$ is the electric permittivity and  $\mu_0$ (positive constant) is the magnetic permeability 
of the medium. The permittivity $\eps$ is assumed to be a bounded  matrix-valued function. 
Let $\Omega$ be a bounded domain occupied by the non-magnetic inhomogeneous medium. The medium outside of 
$\Omega$ is assumed to be
 homogeneous. This means that there is a positive constant $\eps_0$   such that 
 $\eps = \eps_0I$ outside of $\Omega$, where  $I$ is the $3\times3$ identity matrix.
We define the relative material parameter and the wave number as
$$
\epsr = \eps/\eps_0, \quad
k = \omega\sqrt{\eps_0\mu_0}.
$$
Eliminating magnetic field $\H$ from~\eqref{eq:Maxwell} we obtain  
\begin{equation}
  \label{eq:Order2Total}
  \curl \curl \E   - k^2 \epsr  \E  = 0, \quad   \text{in } \R^3.
\end{equation}
The transmission conditions  across the boundary of $\Omega$ are given by
\begin{align}
\label{transmission}
\nu \times \E_+ = \nu \times \E_- \quad \text{ and}  \quad \nu \times \curl \E_+ = \nu \times \curl \E_-, \quad 
\text{on } \partial \Omega.
\end{align}
We denote by $\mathbf{F}_+$ and $\mathbf{F}_-$ the traces on $\partial \Omega$ from the exterior and interior of the domain $\Omega$  
for a vector-valued function $\mathbf F$ respectively, and $\nu$ is the unit outward normal vector on $\partial \Omega$.
Assume that we illuminate the inhomogeneous anisotropic medium with the electric and magnetic incident fields $ \E_\mathrm{in}$ and $\H_\mathrm{in}$, 
respectively, satisfying
\begin{align*}
 \curl \H_\mathrm{in} + i \omega \eps_0 \Ei = 0 \quad \text{ and} \quad \curl \Ei - i \omega\mu_0 \H_\mathrm{in} = 0, \quad \text{in } \R^3.
\end{align*}
Then  there arises the scattered electric field 
$\u$, defined by $\u:= \E - \Ei$. 
Since the incident field $\Ei$ satisfies the homogeneous Maxwell equation with wave number $k$ given by 
$$\curl\curl \Ei - k^2  \Ei= 0, \quad   \text{in } \R^3$$
 subtracting this equation from~\eqref{eq:Order2Total} we can conclude that 
the scattered field $\u \in H_{\mathrm{loc}}(\curl,\R^3)$ is the solution to
\begin{align}
\label{eq:secondOrder}
  \curl \curl \u - k^2 \epsr  \u =  k^2P\Ei  \quad  \text{in } \R^3,
\end{align}
where the contrast $P$ is defined by
\begin{equation*}
 P := \epsr - I.
\end{equation*}
Therefore, by definition we have that the support of the contrast $P$ is given by $\Omega$. Note that we also have the corresponding transmission conditions for the scattered field $\u$ following from~\eqref{transmission}.  
We complete the scattering problem by the  Silver-M\"{u}ller radiation condition for the scattered field $\u$ given by 
\begin{align}
 \label{eq:radiationCond}
 \curl \u \times \frac{\x}{|\x|} - ik \u = \mathcal{O}(|\x|^{-2}) \quad \text{as } |\x| \rightarrow \infty, 
\end{align}
which is assumed to hold uniformly with respect to $\x/|\x|$. 

 It is known (see for e.g. \cite{Monk2003a}) that \eqref{eq:secondOrder}--\eqref{eq:radiationCond} is well-posed provided that the contrast $P$ is bounded with non-negative real and imaginary parts with support $\Omega$ provided that the only solution to the homogeneous problem (i.e. $\Ei=0$) is trivial. We will assume that the homogeneous scattering problem only admits the trivial solution. This gives that the mapping $\Ei \mapsto \u$ is linear and bounded from $L^2(\Omega)^3$ into $H_{\mathrm{loc}}(\curl,\R^3)$.  
 
 Now we can talk about the inverse problem. To this end, we define $\widehat{\x} = \x/|\x|$,
 \begin{align*}
\mathbb{S}^2 = \{\x \in \R^3: |\x| = 1 \} \quad \text{ and } \quad L^2_t(\mathbb{S}^2)= \{ \v \in L^2(\mathbb{S}^2)^3: \widehat{\x} \cdot \v(\widehat{\x}) = 0,\,\, \widehat{\x} \in \mathbb{S}^2 \} .
\end{align*}
We consider the incident plane wave $\Ei(\x,\di, \q)=\q e^{ik\x\cdot\di}$, where  the vector $\di \in \S^2$ indicates the direction of the incident propagation 
and $\q \in \R^3$ is the polarization vector such that $\q \cdot \di =0$.  
It's well-known that we can express the corresponding scattered wave in terms of the asymptotic expansion 
$$
\u(\x,\di,\q) = \frac{e^{ik|\x|}}{ |\x|} \left ( \u^\infty(\widehat{\x},\di,\q) + O\left( \frac{1}{|\x|^2} \right) \right) \quad \text{ as } \quad |\x| \to \infty,
$$
uniformly in all observation directions $\widehat{\x} \in \mathbb{S}^2$. 
The function $\u^\infty(\widehat{\x},\di,\q)$ belonging to $L_t^2(\mathbb{S}^2)$ for each incident and observation direction is called the far field pattern. 

\textbf{Inverse problem:}
Determine the shape and location of the scatterer $\Omega$ given
 the far field pattern $\u^\infty(\widehat{\x},\di,\q)$  for all $\widehat{\x},\di \in \S^2$ for a single wave number.

\section{The far field operator and its factorization analysis}
\label{Factorization-section}

In this section, we will define and analyze the far field operator corresponding to \eqref{eq:secondOrder}--\eqref{eq:radiationCond}. The analysis in this section will be used to derive sampling methods to solve the inverse shape problem of recovering the scatterer $\Omega$ from the far field data. It is well-known that the far field pattern $\u^\infty(\widehat{\x},\di,\q)$  is linear in $\q$ and can be written as
$$
\u^\infty(\widehat{\x},\di,\q)  = \u^\infty(\widehat{\x},\di)\q 
$$
where $\u^\infty(\widehat{\x},\di)$ is a $3\times 3$ matrix and $\q \cdot \di = 0$, see~\cite{Colto2013}.  
The following reciprocity relation is important in our analysis and its  proof can be found in~\cite[Theorem 6.30]{Colto2013}.
\begin{theorem}
For all $\widehat{\x},\di \in \S^2$, the far field pattern $\u^\infty(\widehat{\x},\di)$ satisfies a reciprocity relation
$$
 \u^\infty(\widehat{\x},\di) = [ \u^\infty(-\di, -\widehat{\x}) ]^\top.
$$
\end{theorem}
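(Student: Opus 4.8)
The plan is to reduce the matrix identity to the scalar statement $\p\cdot\u^\infty(\widehat{\x},\di)\q=\q\cdot\u^\infty(-\di,-\widehat{\x})\p$ for all polarizations $\q\perp\di$ and $\p\perp\widehat{\x}$, since this is exactly $\p\cdot\u^\infty(\widehat{\x},\di)\q=\p\cdot[\u^\infty(-\di,-\widehat{\x})]^\top\q$. The first ingredient I would establish is a volume representation of the far field. Rewriting \eqref{eq:secondOrder} as $\curl\curl\u-k^2\u=k^2P\E$ with source supported in $\Omega$ (where $\E=\Ei+\u$ is the total field), convolution with the free-space dyadic Green's function and its far field asymptotics $\tfrac{1}{4\pi}(I-\widehat{\x}\widehat{\x}^\top)e^{-ik\widehat{\x}\cdot\y}$ give
\[
\p\cdot\u^\infty(\widehat{\x},\di)\q=\frac{k^2}{4\pi}\intO e^{-ik\widehat{\x}\cdot\y}\,\p\cdot P(\y)\,\E(\y;\di,\q)\,\d\y,
\]
the projection acting trivially because $\p\cdot\widehat{\x}=0$. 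The crucial observation is that $\p\,e^{-ik\widehat{\x}\cdot\y}=\Ei(\y;-\widehat{\x},\p)$ is the incident plane wave in direction $-\widehat{\x}$ with polarization $\p$, so the right-hand side is $\tfrac{k^2}{4\pi}\intO\Ei(\y;-\widehat{\x},\p)\cdot P\,\E(\y;\di,\q)\,\d\y$.

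Writing $\E^1=\E(\cdot;\di,\q)$, $\E^2=\E(\cdot;-\widehat{\x},\p)$, with incident parts $\Ei^1,\Ei^2$ and scattered parts $\u^1,\u^2$, and using the symmetry of the permittivity tensor $\epsr=\epsr^\top$ (equivalently $P=P^\top$, the standard hypothesis under which reciprocity holds), the claim becomes the symmetry
\[
\intO\Ei^2\cdot P\,\E^1\,\d\y=\intO\Ei^1\cdot P\,\E^2\,\d\y,
\]
since the right-hand side is precisely $\tfrac{4\pi}{k^2}\,\q\cdot\u^\infty(-\di,-\widehat{\x})\p$, as seen by applying the same representation to the incidence $(-\widehat{\x},\p)$ in observation direction $-\di$ against $\q$. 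To prove this symmetry I would pass from volume to boundary integrals: the pointwise identity $k^2P\E^j=\curl\curl\u^j-k^2\u^j$ together with the vector Green's second identity on $\Omega$ and $\curl\curl\Ei^j=k^2\Ei^j$ yields $k^2\intO\Ei^2\cdot P\E^1=\langle\u^1,\Ei^2\rangle$ and $k^2\intO\Ei^1\cdot P\E^2=\langle\u^2,\Ei^1\rangle$, where $\langle\u,\v\rangle:=\int_{\partial\Omega}\big[(\nu\times\u)\cdot\curl\v-(\nu\times\v)\cdot\curl\u\big]\,\d s$ is the antisymmetric boundary form. Everything thus reduces to showing $\langle\u^1,\Ei^2\rangle=\langle\u^2,\Ei^1\rangle$.

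This last identity I would obtain by expanding $\langle\E^1,\E^2\rangle$ via $\E^j=\Ei^j+\u^j$ and invoking three facts: (i) $\langle\Ei^1,\Ei^2\rangle=0$, since both plane waves satisfy $\curl\curl\Ei^j=k^2\Ei^j$ throughout $\R^3$ and the interior Green's identity gives zero; (ii) $\langle\u^1,\u^2\rangle=0$, since both scattered fields satisfy $\curl\curl\u^j=k^2\u^j$ in $\R^3\setminus\overline{\Omega}$, so Green's identity on the annulus $B_R\setminus\Omega$ moves the form to the sphere $|\x|=R$, where the Silver--M\"uller condition \eqref{eq:radiationCond} forces it to vanish as $R\to\infty$; and (iii) $\langle\E^1,\E^2\rangle=0$, since both total fields solve $\curl\curl\E^j=k^2\epsr\E^j$ in $\Omega$ and the interior identity leaves $k^2\intO(\E^2\cdot\epsr\E^1-\E^1\cdot\epsr\E^2)\,\d\y$, which vanishes by $\epsr=\epsr^\top$. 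Combining (i)--(iii) with bilinearity and antisymmetry gives $\langle\u^1,\Ei^2\rangle=-\langle\Ei^1,\u^2\rangle=\langle\u^2,\Ei^1\rangle$, and chaining the equalities back through the far field representation delivers the reciprocity relation.

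I expect the main obstacle to be the radiating-pair lemma (ii): one must verify carefully that the boundary form over the large sphere tends to zero, which is the vectorial analogue of the corresponding acoustic estimate and the only place where the radiation condition is genuinely used. Getting the tangential projection and the constant right in the far field representation of the first step is the other point requiring care, while the symmetry $\epsr=\epsr^\top$ is the essential algebraic input without which the statement fails.
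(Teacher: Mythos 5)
The paper does not actually prove this statement: it cites \cite[Theorem 6.30]{Colto2013} and moves on. Your argument is, in substance, the standard Colton--Kress reciprocity proof adapted to the penetrable anisotropic medium, and it is correct: the far field representation via the dyadic Green's function, the identification of the test function $\p\,e^{-ik\widehat{\x}\cdot\y}$ with the incident wave $\Ei(\cdot;-\widehat{\x},\p)$, the reduction to the symmetry of $\intO \Ei^2\cdot P\,\E^1$, and the three vanishing boundary forms (incident--incident by entireness, scattered--scattered by Silver--M\"uller on a large sphere, total--total by the interior equation) are exactly the ingredients of the textbook proof. Two remarks. First, you correctly identify that the symmetry $\epsr=\epsr^\top$ is indispensable for step (iii) and hence for the theorem in the anisotropic setting; the paper never states this hypothesis (its Assumption~\ref{assume} only requires $P\in L^\infty(\Omega)^{3\times3}$), so your proof exposes an assumption the paper leaves implicit by deferring to a reference whose Theorem~6.30 actually concerns the perfect conductor rather than a penetrable anisotropic medium. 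Second, the matrix identity should be read as an identity of the tangential-to-tangential parts (i.e., tested against $\p\perp\widehat{\x}$, $\q\perp\di$), which is precisely the reduction you make; as a full $3\times3$ identity it is a matter of convention for the non-physical components. Neither point is a gap in your argument.
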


We now define the far field operator $\mathcal{F}:L^2_t(\S^2) \to L^2_t(\S^2)$ as
$$
(\mathcal{F}\g)(\widehat{\x})  = \int_{\S^2}  \u^\infty(\widehat{\x},\di) \g(\di) \d s(\di). 
$$
In order to derive our sampling methods we will need to factorize the far field operator $\mathcal{F}$. To this end, it has been shown in \cite{Kirsc2004} that the far field operator has the following factorization $\mathcal{F} = G H $. Here the operator 
\begin{align}\label{operatorH}
H:L_t^2(\mathbb{S}^2) \to L^2(\Omega)^3 \,\, \text{ is given by } \,\, (H\g)(\x) =  \int_{\S^2} \g(\di) e^{ik\x\cdot\di} \d s(\di), \quad \x \in \Omega
\end{align}
and 
is the superposition of incident plane waves.  It is easy to see that $H$ is compact and injective. 
 The data to far field pattern operator
\begin{align}\label{operatorG}
G:  L^2(\Omega)^3 \to L_t^2(\mathbb{S}^2) \quad \text{ is given by } \quad G \f = \v^{\infty}
\end{align}
where $\v \in H_{\mathrm{loc}}(\curl,\R^3)$ is the solution to \eqref{eq:secondOrder}--\eqref{eq:radiationCond} with $\Ei = \f$. 
 The adjoint operator $H^*$ of $H$ is given by
$$
H^* \f (\di)= \di \times \left(\int_{\Omega}  \f(\x) e^{-ik \x \cdot \di} \, \d \x \right) \times \di, \quad  \di \in \S^2.
$$
From the form of the data to far field pattern operator a factorization of $G$ in \cite{Kirsc2004} gives that 
$G=H^* T$ with 
\begin{align}\label{operatorT}
T:L^2(\Omega)^3 \to  L^2(\Omega)^3 \,\, \text{ is given by } \,\, (T\f)(\x) =  k^2 P (\f +\v)
\end{align}
where again $\v \in H_{\mathrm{loc}}(\curl,\R^3)$ is the solution to \eqref{eq:secondOrder}--\eqref{eq:radiationCond} with $\Ei = \f$. Due to the well-posedness 
of the direct problem and the boundedness of $P$ it is clear that $T$ is a bounded linear operator on $L^2(\Omega)^3$, and hence $G = H^*T$ is also a compact operator. This gives that $\mathcal{F} = H^* T H$. 
This factorization  is used to derive the Factorization method for reconstruction $\Omega$ in \cite{Kirsc2004}. In order to continue we first make the following assumptions on the domain and coefficients. 
\begin{assumption}\label{assume}
We will assume that $\Omega$ is a Lipschitz bounded domain in $\R^3$. The contrast is assumed that $P \in L^{\infty}(\Omega)^{3 \times 3}$.
We lastly assume that the wave number $k$ is not a transmission eigenvalue i.e. the only solution to the homogeneous system in $L^2(\Omega)^3\times L^2(\Omega)^3$
\begin{align}
\curl\curl {\bf w}-k^2 \varepsilon_r {\bf w}=0 \,\, \text{ and } \,\, \curl\curl \boldsymbol{\varphi}-k^2  \boldsymbol{\varphi}=0\quad \text{ in} \,\ \Omega \label{te-problem1} \\
\nu \times {\bf w} = \nu \times \boldsymbol{\varphi}\,\, \text{ and } \,\, \nu \times \curl {\bf w} = \nu \times  \curl \boldsymbol{\varphi} \quad \text{on } \partial \Omega \label{te-problem2}
\end{align}
is trivial. 
\end{assumption}
%
It is known that the set of real transmission eigenvalues is at most discrete for a real-valued permittivity and is empty for a complex-valued permittivity. See \cite{Cakon2011} and the references therein for the analysis of transmission eigenvalue problems. The following results can be found in \cite{Kirsc2004}. These results will be critical in the later section where we study the orthogonality and direct sampling methods. 

\begin{theorem}\label{fm-results}
Let the operators $H$ and $T$ be as defined in \eqref{operatorH}--\eqref{operatorT} and let $\p,\, \y_s \in \R^3$.
\begin{enumerate}
\item[a.] The operator $H$ is compact and injective.
\item[b.]  The far field operator $\mathcal{F}$ is compact and injective. 
\item[c.] The far field operator has the factorization $\mathcal{F} = G H $ where $G=H^* T$. 
\item[d.] Let $\phi_{\y_s}(\di) = (\di \times\p)\times \di  \, e^{-ik\di \cdot \y_s}$ then $\phi_{\y_s} \in \mathrm{Range}(H^*) \iff \y_s \in \Omega$.
\end{enumerate}
\end{theorem}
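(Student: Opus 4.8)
My plan is to treat parts (a)--(c) as recollections and to concentrate on the range identity (d), which carries the real content. For (a), compactness of $H$ is immediate because its kernel $e^{ik\x\cdot\di}$ is real-analytic, and injectivity holds because $H\g=0$ means the vector Herglotz wave function $\int_{\S^2}\g(\di)e^{ik\x\cdot\di}\,\d s(\di)$ vanishes on the open set $\Omega$; being an entire solution of $\curl\curl-k^2$ it must then vanish on all of $\R^3$, which forces $\g=0$. Compactness of $\mathcal F$ in (b) follows likewise from analyticity of the kernel $\u^\infty(\widehat\x,\di)$, while its injectivity, together with (c), follows from the factorization $\mathcal F=H^*TH$ already derived in the text combined with the standing assumption that $k$ is not a transmission eigenvalue, exactly as in \cite{Kirsc2004}.

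The engine for (d) is the observation that $H^*$ returns, up to a fixed nonzero constant $c_k$, the far field pattern of the radiating Maxwell field produced by an $L^2$ source supported in $\Omega$. Writing $\Phi(\x,\y)=e^{ik|\x-\y|}/(4\pi|\x-\y|)$, the radiating solution $\v$ of $\curl\curl\v-k^2\v=\g$ admits the dyadic representation whose far field is the tangential projection $\widehat\x\mapsto(\widehat\x\times\,\cdot\,)\times\widehat\x$ applied to $\int_\Omega e^{-ik\widehat\x\cdot\y}\g(\y)\,\d\y$. Comparing with the definition of $H^*$ gives $\v^\infty=c_k\,H^*\g$, hence
\[
\mathrm{Range}(H^*)=\big\{\,\v^\infty : \v \text{ is the radiating solution of } \curl\curl\v-k^2\v=\g \text{ in } \R^3,\ \g\in L^2(\Omega)^3 \,\big\}.
\]
I would also record that $\phi_{\y_s}=c_k^{-1}\E_{\mathrm{dip}}^\infty$, where $\E_{\mathrm{dip}}(\x)=\curl\curl\big(\p\,\Phi(\x,\y_s)\big)$ is the electric dipole at $\y_s$ with polarization $\p$, since the far field of $\curl\curl(\p\,\Phi(\cdot,\y_s))$ is exactly $\tfrac{k^2}{4\pi}(\widehat\x\times\p)\times\widehat\x\,e^{-ik\widehat\x\cdot\y_s}$.

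For the direction $\y_s\in\Omega\Rightarrow\phi_{\y_s}\in\mathrm{Range}(H^*)$ I would use a cutoff rather than an extension, which avoids all trace-matching issues: choose balls $B_\rho(\y_s)\subset B_{2\rho}(\y_s)\subset\Omega$ and $\chi\in C^\infty(\R^3)$ with $\chi\equiv0$ on $B_\rho(\y_s)$ and $\chi\equiv1$ outside $B_{2\rho}(\y_s)$, then set $\v:=\chi\,\E_{\mathrm{dip}}$. The field $\v$ is smooth everywhere (the singularity at $\y_s$ is killed by $\chi$) and coincides with $\E_{\mathrm{dip}}$ for large $|\x|$, so it is radiating and has far field $\E_{\mathrm{dip}}^\infty$; its source $\g:=\curl\curl\v-k^2\v$ is smooth and supported in the annulus $B_{2\rho}(\y_s)\setminus B_\rho(\y_s)\subset\Omega$, hence $\g\in L^2(\Omega)^3$. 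By the characterization above, $\E_{\mathrm{dip}}^\infty=c_k H^*\g$, so $\phi_{\y_s}\in\mathrm{Range}(H^*)$. For the converse I argue by contradiction: if $\phi_{\y_s}=c_k H^*\g$, then the smooth radiating field $\v$ of source $\g\subset\Omega$ has the same far field as $\E_{\mathrm{dip}}$, and Rellich's lemma together with unique continuation for the Maxwell system force $\v=\E_{\mathrm{dip}}$ on the unbounded component of $\R^3\setminus\overline\Omega$. Since $\v$ is a smooth homogeneous solution throughout $\R^3\setminus\overline\Omega$ while $\E_{\mathrm{dip}}$ has a non-removable singularity at $\y_s$, evaluating near $\y_s\in\R^3\setminus\overline\Omega$ gives the contradiction. \textbf{The main obstacle} is the converse, specifically the borderline case $\y_s\in\partial\Omega$: there the singularity approaches the boundary where interior-source fields need not stay regular, so the clean contradiction must be supplemented by a limiting argument as $\y_s$ approaches $\partial\Omega$ from the exterior (the case $\y_s\in\R^3\setminus\overline\Omega$ being immediate). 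A secondary technical point is justifying the dyadic far-field computation underlying the characterization, i.e. that the radiating solution operator for $\curl\curl-k^2$ maps an $L^2(\Omega)$ source to a far field proportional to $H^*\g$.
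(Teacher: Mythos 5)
The paper does not actually prove Theorem \ref{fm-results}: it is introduced with the single line ``The following results can be found in \cite{Kirsc2004}'', so there is no in-paper argument to compare against. Your sketch reconstructs the standard proof from that reference and is essentially correct. Parts (a)--(c) go exactly as you say (smooth kernels for compactness, vanishing of an entire Herglotz field for injectivity of $H$, the interior transmission problem assumption for injectivity of $\mathcal{F}$), and for (d) your two main ingredients --- the identification of $\mathrm{Range}(H^*)$ with the set of far fields of radiating solutions of $\curl\curl\v-k^2\v=\g$ with $\g\in L^2(\Omega)^3$, and the cutoff construction for the implication $\y_s\in\Omega\Rightarrow\phi_{\y_s}\in\mathrm{Range}(H^*)$ --- are precisely the ones used in \cite{Kirsc2004}.

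Two remarks on the loose ends you flag. First, the identification $\v^\infty=c_kH^*\g$ does require the dyadic kernel $(I+k^{-2}\nabla\nabla)\Phi$ rather than the componentwise Helmholtz volume potential (whose far field would miss the tangential projection); its far field is $\tfrac{1}{4\pi}\big(I-\widehat{\x}\,\widehat{\x}^{\top}\big)e^{-ik\widehat{\x}\cdot\y}$, giving $c_k=1/(4\pi)$, and the strong singularity of $\nabla\nabla\Phi$ causes no difficulty for the far-field computation since only $|\x|\to\infty$ is involved. Second, for the borderline case $\y_s\in\partial\Omega$ a limiting argument is not the right tool (the set of $\y_s$ for which $\phi_{\y_s}$ lies in the range is not obviously closed); the standard resolution is direct: the radiating field $\v$ generated by an $L^2(\Omega)^3$ source lies in $H_{\mathrm{loc}}(\curl,\R^3)$ and is in particular locally square integrable near $\y_s$, whereas $\E_{\mathrm{dip}}$ behaves like $|\x-\y_s|^{-3}$ and fails to be square integrable on any truncated cone with vertex $\y_s$ contained in $\R^3\setminus\overline{\Omega}$ (such a cone exists because $\Omega$ is Lipschitz). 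Since Rellich's lemma and unique continuation force $\v=\E_{\mathrm{dip}}$ on the unbounded exterior component, this gives the contradiction for $\y_s\in\partial\Omega$ and, a fortiori, for $\y_s\notin\overline{\Omega}$. With these two points filled in, your argument is complete.
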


The following coercivity result from  \cite{Hadda2014} is important for our theoretical analysis of the sampling methods.
\begin{theorem}\label{fm-results3}
Let the operator $T$ be as defined in \eqref{operatorT}. If the matrix-valued contrast $P  \in L^{\infty}(\Omega)^{3 \times 3} $ satisfies either 
\begin{enumerate}
\item The imaginary part $\Im(P)$ is uniformly positive definite or 
\item There is a constant $\alpha \geq 0$ such that $\Re(P)+\alpha \Im(P)$ is uniformly positive definite and $\Im(P)$ positive semidefinite. 
\end{enumerate}
Then provided that $k$ is not a transmission eigenvalue the operator $T$ is coercive on $\overline{\mathrm{Range}(H)}$.
\end{theorem}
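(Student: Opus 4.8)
The goal is to produce a constant $c>0$ with $|(T\f,\f)|\ge c\|\f\|^2$ for every $\f\in\overline{\mathrm{Range}(H)}$, so the plan is to analyse the sesquilinear form $(T\f,\f)$ directly. Fix such an $\f$, let $\v$ be the radiating solution of \eqref{eq:secondOrder}--\eqref{eq:radiationCond} with $\Ei=\f$, and set $\E=\f+\v$, so that $T\f=k^2P\E$. Rewriting the equation for $\v$ gives $\curl\curl\v-k^2\v=k^2P\E$ in $\R^3$ and $\curl\curl\E-k^2\epsr\E=0$ in $\Omega$; in particular $\v$ is the unique radiating field produced by the $\Omega$-supported source $k^2P\E$, so boundedness of the free-space Maxwell resolvent gives $\|\v\|_{L^2(\Omega)}\le C\|\E\|_{L^2(\Omega)}$ and hence $\|\f\|\le(1+C)\|\E\|$. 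This reduces the problem to bounding the form below by $\|\E\|^2$.

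First I would establish the energy identity. Since $\overline\f=\overline\E-\overline\v$,
\begin{equation*}
(T\f,\f)=k^2\int_\Omega P\E\cdot\overline\E\,\d\x-k^2\int_\Omega P\E\cdot\overline\v\,\d\x .
\end{equation*}
In the second integral I replace $k^2P\E$ by $\curl\curl\v-k^2\v$, integrate over a large ball $B_R\supset\Omega$ (the integrand vanishes outside $\Omega$), and apply the vector Green identity. The volume terms $\int_{B_R}(|\curl\v|^2-k^2|\v|^2)$ are real, and as $R\to\infty$ the boundary term is evaluated with the Silver--M\"uller condition \eqref{eq:radiationCond} and yields the radiated energy. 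Taking imaginary parts and using that $\Re P$ and $\Im P$ are symmetric (so their Hermitian forms are real) leaves
\begin{equation*}
\Im(T\f,\f)=k^2\int_\Omega(\Im P)\E\cdot\overline\E\,\d\x+k\,\|\v^\infty\|_{L^2(\S^2)}^2\ge 0 .
\end{equation*}
In Case~1 this finishes the proof: if $\Im P\ge c_0 I$ uniformly then $|(T\f,\f)|\ge\Im(T\f,\f)\ge k^2c_0\|\E\|^2\ge k^2c_0(1+C)^{-2}\|\f\|^2$, and no transmission-eigenvalue hypothesis is needed because a contrast with positive-definite imaginary part has no real transmission eigenvalues.

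Case~2 is the substantial one: now $\Im P$ is only semidefinite, so the imaginary part alone does not control $\|\E\|$. Here I would also read off the real part of the identity and form $\Re(T\f,\f)+\alpha\,\Im(T\f,\f)=\Re\big(((1-i\alpha)T)\f,\f\big)$, a rotated real part whose volume contribution is $k^2\int_\Omega(\Re P+\alpha\Im P)\E\cdot\overline\E\ge k^2c_1\|\E\|^2$ by hypothesis. The remaining contributions are built from $\v$ and its far field and are not sign-definite, so coercivity cannot be read off directly. Instead I would decompose $T=T_0+T_1$ with $T_0$ self-adjoint and coercive on $\overline{\mathrm{Range}(H)}$ and $T_1$ compact, obtain a G\aa{}rding inequality from the volume term, and upgrade it to coercivity by combining $\Im(T\f,\f)\ge0$ with the injectivity of $T$ on $\overline{\mathrm{Range}(H)}$. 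Injectivity is exactly where the no-transmission-eigenvalue assumption enters: $(T\f,\f)=0$ forces the absorption and radiation terms to vanish, so $\v^\infty=0$, whence $\v=0$ outside $\Omega$ by Rellich's lemma, and the transmission conditions then make $(\E,\f)$ a solution of the homogeneous system \eqref{te-problem1}--\eqref{te-problem2}, which is trivial by Assumption~\ref{assume}; thus $\f=0$.

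The main obstacle is the compact-perturbation step. Unlike the Helmholtz case, $H(\curl,\Omega)$ does not embed compactly into $L^2(\Omega)^3$, so the solution map $\f\mapsto\v|_\Omega$ need not be compact and the splitting $T=T_0+T_1$ is not automatic. Following \cite{Kirsc2004} I would resolve this with a Helmholtz (Hodge) decomposition of $L^2(\Omega)^3$ into gradient and divergence-free parts: on the divergence-free part the available regularity gain does produce compactness, while the gradient part is handled separately. Assembling the G\aa{}rding inequality across this decomposition and invoking the abstract coercivity criterion for injective operators of the form ``coercive plus compact'' with nonnegative imaginary part then completes Case~2.
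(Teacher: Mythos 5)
The paper does not actually prove this theorem: its ``proof'' is a one-line citation to Theorem~10 of \cite{Hadda2014}, which in turn rests on the factorization analysis of \cite{Kirsc2004}. Your outline is, in substance, a reconstruction of that cited argument, and its ingredients are the right ones: the energy identity obtained from Green's formula over a large ball together with the Silver--M\"uller condition, yielding $\Im(T\f,\f)=k^2\int_\Omega(\Im P)\E\cdot\ol{\E}\,\d\x+ck\|\v^\infty\|^2_{L^2(\S^2)}\ge 0$ (the precise constant in front of the far-field term is immaterial); the immediate conclusion in Case~1, where indeed no transmission-eigenvalue hypothesis is needed; and, for Case~2, the G\aa rding-plus-compactness-plus-injectivity scheme, with injectivity supplied by Rellich's lemma, unique continuation, and Assumption~\ref{assume}. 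You also correctly identify the Maxwell-specific obstruction --- $H(\curl,\Omega)$ does not embed compactly into $L^2(\Omega)^3$, so $\f\mapsto\v|_\Omega$ is not obviously compact --- and its standard cure via a Helmholtz decomposition, which is exactly how \cite{Kirsc2004} proceeds. The one caveat is that the decisive step of Case~2 --- actually exhibiting the splitting $T=T_0+T_1$ with $T_0$ coercive on $\overline{\mathrm{Range}(H)}$ and $T_1$ compact, and stating precisely the abstract lemma that upgrades the resulting G\aa rding inequality to coercivity for an injective operator with nonnegative imaginary part --- is announced rather than carried out, so what you have is a correct and well-targeted plan rather than a self-contained proof; the deferred content is precisely what one imports from \cite{Kirsc2004} or \cite{Hadda2014}. (A minor implicit assumption, shared with the paper, is that $\Re P$ and $\Im P$ are symmetric so that the positivity hypotheses make sense and the quadratic forms are real.)
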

\begin{proof}
See the proof of Theorem 10 in  \cite{Hadda2014} for details. 
\end{proof}

\section{Orthogonality sampling method}\label{OSM-section}

This section is dedicated to studying the OSM for solving the inverse problem. 
We first define the imaging functional for the OSM and  then prove a resolution 
analysis as well as stability for the imaging functional which is the main theoretical result of this section. After that
we  show an equivalence between the imaging functionals of the OSM 
and the DSM.

\textbf{The imaging functional.}  Let $\y_s$ be the sampling points in the imaging process and $\mathbf{p} \in \R^3$
is a fixed vector. We are interested in imaging of the scatterer $\Omega$ given the far field pattern $\u^\infty(\widehat{\x},\di,\q)$, for all $\widehat{\x},\di \in \mathbb{S}^2$ and polarization vector $\q=(\di \times\p)\times \di$.  We define the imaging functional  $\mathcal{I}_{OSM}$ for the OSM  as
\begin{align}
\label{indicator_OSM}
\mathcal{I}_{OSM}(\y_s) :=  \int_{\S^2} \left| \int_{\S^2} \u^\infty(\widehat{\x},\di)(\di \times \p) \times \di \cdot (\widehat{\x}\times\p)\times \widehat{\x}  \, e^{ik\widehat{\x} \cdot \y_s}  \d s(\widehat{\x}) \right|^2 \d s(\di).
\end{align}
The use of $(\di \times\p)\times \di$ in the imaging functional is useful for the analysis in this section and    to have the polarization vector belonging to $L^2_t(\S^2)$ for any choice of $\p \in \R^3$.  We now wish to write the imaging functional in terms of the far field operator. This will allow us to study the the imaging functional using the factorization analysis in the previous section.

\begin{lemma}
\label{OSM}
The imaging functional $\mathcal{I}_{OSM}$ satisfies 
\begin{align*}
\mathcal{I}_{OSM}(\y_s) = \|\p \cdot \mathcal{F}\phi_{\y_s}\|^2_{L^2(\S^2)},
\end{align*}
where  $\phi_{\y_s} \in L^2_t(\S^2)$ is given by
\begin{align*}
\phi_{\y_s}(\di) = (\di \times\p)\times \di  \, e^{-ik\di \cdot \y_s}.
\end{align*}
\end{lemma}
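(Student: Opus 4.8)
The plan is to manipulate the inner integral appearing in~\eqref{indicator_OSM} until it is recognized as a component of $\mathcal{F}\phi_{\y_s}$ evaluated at a reflected direction, with the reciprocity relation of Theorem~1 providing the crucial bridge. First I would fix $\di$ and isolate the inner integral
\[
A(\di) := \int_{\S^2} \u^\infty(\widehat{\x},\di)[(\di \times \p) \times \di] \cdot [(\widehat{\x}\times\p)\times \widehat{\x}]  \, e^{ik\widehat{\x} \cdot \y_s}  \d s(\widehat{\x}),
\]
so that $\mathcal{I}_{OSM}(\y_s) = \int_{\S^2}|A(\di)|^2 \d s(\di)$. Using the adjoint identity $\mathbf{a}\cdot M\mathbf{b} = M^\top\mathbf{a}\cdot\mathbf{b}$ I would move the matrix onto the other factor, rewriting the integrand via $[\u^\infty(\widehat{\x},\di)]^\top[(\widehat{\x}\times\p)\times\widehat{\x}]\cdot[(\di\times\p)\times\di]$, and then apply the reciprocity relation $[\u^\infty(\widehat{\x},\di)]^\top = \u^\infty(-\di,-\widehat{\x})$.

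The next key step is a sign-flipping change of variables $\widehat{\x}\mapsto-\widehat{\x}$. Since $ds$ is invariant, $\widehat{\x}\cdot\y_s$ changes sign, and the vector $(\widehat{\x}\times\p)\times\widehat{\x}$ is invariant under $\widehat{\x}\mapsto-\widehat{\x}$ (the two sign flips cancel), this converts the exponential into $e^{-ik\widehat{\x}\cdot\y_s}$ and leaves exactly the integrand $\u^\infty(-\di,\widehat{\x})\,\phi_{\y_s}(\widehat{\x})$. Hence the bracketed $\widehat{\x}$-integral becomes precisely $(\mathcal{F}\phi_{\y_s})(-\di)$, giving $A(\di) = (\mathcal{F}\phi_{\y_s})(-\di)\cdot[(\di\times\p)\times\di]$. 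A second change of variables $\di\mapsto-\di$ in the outer integral, again using the invariance of $(\di\times\p)\times\di$, then yields
\[
\mathcal{I}_{OSM}(\y_s) = \int_{\S^2}\left|(\mathcal{F}\phi_{\y_s})(\widehat{\x})\cdot[(\widehat{\x}\times\p)\times\widehat{\x}]\right|^2 \d s(\widehat{\x}).
\]

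The final step is the algebraic collapse of the dot product. Since $\mathcal{F}\phi_{\y_s}\in L^2_t(\S^2)$, the vector $(\mathcal{F}\phi_{\y_s})(\widehat{\x})$ is tangential, so $\widehat{\x}\cdot(\mathcal{F}\phi_{\y_s})(\widehat{\x})=0$. Invoking the identity $(\widehat{\x}\times\p)\times\widehat{\x} = \p-(\widehat{\x}\cdot\p)\widehat{\x}$, the term proportional to $\widehat{\x}$ is annihilated against the tangential field, leaving $(\mathcal{F}\phi_{\y_s})(\widehat{\x})\cdot[(\widehat{\x}\times\p)\times\widehat{\x}] = \p\cdot(\mathcal{F}\phi_{\y_s})(\widehat{\x})$, which establishes $\mathcal{I}_{OSM}(\y_s)=\|\p\cdot\mathcal{F}\phi_{\y_s}\|^2_{L^2(\S^2)}$. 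I expect the main obstacle to be purely bookkeeping: correctly tracking the transpose in the reciprocity relation together with the two antipodal substitutions, and verifying carefully that each of $(\widehat{\x}\times\p)\times\widehat{\x}$ and $(\di\times\p)\times\di$ is genuinely even under the reflection so that no stray sign survives. The tangentiality argument at the end is short but essential, and it is where the specific choice of the polarization $(\di\times\p)\times\di$ in the functional pays off.
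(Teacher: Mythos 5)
Your proposal is correct and follows essentially the same route as the paper's proof: reciprocity to transpose the far field matrix, antipodal changes of variables (using that $(\widehat{\x}\times\p)\times\widehat{\x}$ is even), recognition of the inner integral as $\mathcal{F}\phi_{\y_s}$, and finally the identity $(\widehat{\x}\times\p)\times\widehat{\x}=\p-(\widehat{\x}\cdot\p)\widehat{\x}$ together with the tangentiality of $\mathcal{F}\phi_{\y_s}$. The only cosmetic difference is that you perform the two reflections $\widehat{\x}\mapsto-\widehat{\x}$ and $\di\mapsto-\di$ separately, whereas the paper combines them with a relabeling of the roles of $\widehat{\x}$ and $\di$.
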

\begin{proof} 
We use the reciprocity relation and interchange the roles of $\widehat{\x}$ and $\di$
\begin{align*}
\mathcal{I}_{OSM} (\y_s)
& =  \int_{\S^2} \left| \int_{\S^2} \u^\infty(\widehat{\x},\di)(\di \times \p) \times \di \cdot (\widehat{\x}\times\p)\times \widehat{\x}  \, e^{ik\widehat{\x} \cdot \y_s}  \d s(\widehat{\x}) \right|^2 \d s(\di) \\
 &=  \int_{\S^2} \left|  \int_{\S^2} [\u^\infty(-\di,-\widehat{\x})]^\top (\di \times \p) \times \di  \cdot (\widehat{\x}\times\p)\times \widehat{\x}  \, e^{ik\widehat{\x} \cdot \y_s} \, \d s(\widehat{\x}) \right |^2 \d s(\di)   \\
 &=  \int_{\S^2} \left|  \int_{\S^2} [\u^\infty(\di,\widehat{\x})]^\top (\di \times \p) \times \di  \cdot (\widehat{\x}\times\p)\times \widehat{\x}  \, e^{-ik\widehat{\x} \cdot \y_s} \, \d s(\widehat{\x}) \right |^2 \d s(\di)   \\
& = \int_{\S^2} \left|  \int_{\S^2} \u^\infty(\di,\widehat{\x}) ((\widehat{\x}\times\p)\times \widehat{\x}  \, e^{- ik\widehat{\x} \cdot \y_s} ) \cdot (\di \times \p) \times \di \, \d s(\widehat{\x}) \right |^2 \d s(\di)   \\
& = \int_{\S^2} \left| (\widehat{\x}\times\p)\times \widehat{\x}\cdot \int_{\S^2} \u^\infty(\widehat{\x},\di) ({\di}\times\p)\times {\di}  \, e^{- ik{\di} \cdot \y_s}  \, \d s({\di}) \right |^2 \d s(\widehat{\x})   \\
 & =\left \|(\widehat{\x}\times\p)\times \widehat{\x} \cdot \mathcal{F}\phi_{\y_s} \right \|^2_{L^2(\S^2)}.
\end{align*}
Now using  the   identity 
$$
(\widehat{\x}\times\p)\times \widehat{\x} = \p ( \widehat{\x} \cdot  \widehat{\x} ) -  \widehat{\x} ( \widehat{\x} \cdot \p) = \p -  \widehat{\x} ( \widehat{\x} \cdot \p),
$$
and the fact that  $\mathcal{F} \phi_{\y_s}(\widehat{\x})\cdot \widehat{\x}  = 0$, proves the claim.
\end{proof}

We have shown that the imaging functional can be represented in terms of the far field operator. We now turn our attention to studying its properties. Just as in the previous section we will assume that the contrast satisfies the assumptions of Theorem \ref{fm-results3}. This is to insure the coercivity property of the middle operator $T$.  

\begin{theorem}\label{osm}
For every $\y_s \in \Omega$  
the imaging functional $\mathcal{I}_{OSM}(\y_s)$ is bounded from below by a  positive constant. 
 Moreover, for $\y_s \notin \Omega$ the imaging functional satisfies 
 $$
 \mathcal{I}_{OSM}(\y_s) = O\left( \frac{1}{\mathrm{dist}(\y_s,\Omega)^2} \right), \quad  \text{as } \mathrm{dist}(\y_s,\Omega) \to \infty.
 $$
\end{theorem}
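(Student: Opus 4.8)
The plan is to reduce everything to estimates on the single field $H\phi_{\y_s}\in L^2(\Omega)^3$. Starting from Lemma~\ref{OSM} we have $\mathcal{I}_{OSM}(\y_s)=\|\p\cdot\mathcal{F}\phi_{\y_s}\|^2_{L^2(\S^2)}$, and by Theorem~\ref{fm-results} the far field operator factors as $\mathcal{F}=H^*TH$. The first thing I would do is evaluate $H\phi_{\y_s}$ explicitly. Since $\phi_{\y_s}(\di)=(\di\times\p)\times\di\,e^{-ik\di\cdot\y_s}=(\p-\di(\di\cdot\p))e^{-ik\di\cdot\y_s}$, the definition of $H$ in \eqref{operatorH} gives
\[
(H\phi_{\y_s})(\x)=\int_{\S^2}\big(\p-\di(\di\cdot\p)\big)e^{ik(\x-\y_s)\cdot\di}\,\d s(\di),
\]
which is a function of $\z=\x-\y_s$ alone. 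Applying the Funk-Hecke identity $\int_{\S^2}e^{ik\z\cdot\di}\,\d s(\di)=4\pi j_0(k|\z|)$ with $j_0(t)=\sin t/t$, and differentiating in $\z$ to handle the $\di(\di\cdot\p)$ term, I would obtain a closed form $(H\phi_{\y_s})(\x)=F(\x-\y_s)$ for an explicit smooth vector field $F$ built from $j_0(k|\z|)$ and its first two derivatives. Two features of $F$ drive the whole proof: $F(0)\neq 0$ (a short Taylor expansion gives $F(0)=\tfrac{8\pi}{3}\p$), and $F(\z)=O(1/|\z|)$ as $|\z|\to\infty$, since $j_0$ and all its derivatives decay like $1/|\z|$.

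For the lower bound when $\y_s\in\Omega$, I would not estimate the scalar $\p\cdot\mathcal{F}\phi_{\y_s}$ directly but pass through the quadratic form $\langle\mathcal{F}\phi_{\y_s},\phi_{\y_s}\rangle$. Writing out the inner product and using that $\mathcal{F}\phi_{\y_s}$ is tangential (so $\mathcal{F}\phi_{\y_s}\cdot\p_t=\mathcal{F}\phi_{\y_s}\cdot\p$ with $\p_t=(\widehat{\x}\times\p)\times\widehat{\x}$), one gets $\langle\mathcal{F}\phi_{\y_s},\phi_{\y_s}\rangle=\int_{\S^2}(\p\cdot\mathcal{F}\phi_{\y_s})\,e^{ik\widehat{\x}\cdot\y_s}\,\d s$; Cauchy-Schwarz against $e^{ik\widehat{\x}\cdot\y_s}$ (whose $L^2(\S^2)$ norm is $\sqrt{4\pi}$) then yields $\mathcal{I}_{OSM}(\y_s)\geq(4\pi)^{-1}|\langle\mathcal{F}\phi_{\y_s},\phi_{\y_s}\rangle|^2$. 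Next I would use the factorization $\mathcal{F}=H^*TH$ and the adjoint relation to rewrite $\langle\mathcal{F}\phi_{\y_s},\phi_{\y_s}\rangle=\langle TH\phi_{\y_s},H\phi_{\y_s}\rangle_{L^2(\Omega)}$, and apply the coercivity of $T$ on $\overline{\mathrm{Range}(H)}$ from Theorem~\ref{fm-results3} (valid since $H\phi_{\y_s}\in\mathrm{Range}(H)$) to bound $|\langle TH\phi_{\y_s},H\phi_{\y_s}\rangle|\geq c\|H\phi_{\y_s}\|^2$. Combining, $\mathcal{I}_{OSM}(\y_s)\geq\tfrac{c^2}{4\pi}\|H\phi_{\y_s}\|^4$.

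It then remains to bound $\|H\phi_{\y_s}\|_{L^2(\Omega)}$ below uniformly for $\y_s\in\Omega$, and above when $\y_s\notin\Omega$. For the lower bound, continuity of $F$ and $F(0)\neq0$ give $|F(\z)|\geq\tfrac12|F(0)|$ for $|\z|\leq\delta$, so $\|H\phi_{\y_s}\|^2=\int_\Omega|F(\x-\y_s)|^2\,\d\x\geq\tfrac14|F(0)|^2\,|\Omega\cap B_\delta(\y_s)|$; since $\Omega$ is Lipschitz it satisfies a uniform cone condition, which furnishes a lower bound $|\Omega\cap B_\delta(\y_s)|\geq c_0\delta^3>0$ for all $\y_s\in\Omega$, proving the first claim. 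For the decay estimate, when $\y_s\notin\Omega$ every $\x\in\Omega$ satisfies $|\x-\y_s|\geq\mathrm{dist}(\y_s,\Omega)$, so the bound $|F(\z)|\leq C/|\z|$ gives $\|H\phi_{\y_s}\|^2\leq C^2|\Omega|\,\mathrm{dist}(\y_s,\Omega)^{-2}$; then boundedness of $H^*$ and $T$ yields $\|\mathcal{F}\phi_{\y_s}\|\leq\|H^*\|\,\|T\|\,\|H\phi_{\y_s}\|$ and finally $\mathcal{I}_{OSM}(\y_s)\leq|\p|^2\|\mathcal{F}\phi_{\y_s}\|^2=O(\mathrm{dist}(\y_s,\Omega)^{-2})$.

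The main obstacle is the uniform-in-$\y_s$ lower bound on $\|H\phi_{\y_s}\|$ over $\y_s\in\Omega$: pointwise non-vanishing of $H\phi_{\y_s}$ is immediate from injectivity of $H$, but uniformity requires both the explicit Funk-Hecke evaluation (to locate the value $F(0)$ and certify $F(0)\neq0$) and the geometric cone estimate for Lipschitz domains to prevent the mass of $F$ near the origin from escaping as $\y_s$ approaches $\partial\Omega$. A secondary subtlety is the passage from the scalar imaging functional $\|\p\cdot\mathcal{F}\phi_{\y_s}\|$ to the coercive quadratic form; a naive pointwise Cauchy-Schwarz only bounds it from above by $\|\mathcal{F}\phi_{\y_s}\|$, so the tangentiality identity of Lemma~\ref{OSM} must be exploited to route the lower estimate through $\langle\mathcal{F}\phi_{\y_s},\phi_{\y_s}\rangle$ instead.
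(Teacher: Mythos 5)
Your proposal is correct, and its skeleton coincides with the paper's: both pass from $\mathcal{I}_{OSM}(\y_s)=\|\p\cdot\mathcal{F}\phi_{\y_s}\|^2$ to the quadratic form $\langle\mathcal{F}\phi_{\y_s},\phi_{\y_s}\rangle$ via the tangentiality identity and Cauchy--Schwarz, invoke the factorization $\mathcal{F}=H^*TH$ and the coercivity of $T$ to obtain $\mathcal{I}_{OSM}(\y_s)\geq C\|H\phi_{\y_s}\|^4$, and prove the decay by the same Funk--Hecke computation showing $(H\phi_{\y_s})(\x)=F(\x-\y_s)$ with $F(\z)=O(1/|\z|)$. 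The one step where you genuinely diverge is the lower bound on $\|H\phi_{\y_s}\|$ for $\y_s\in\Omega$. The paper uses the range characterization of Theorem~\ref{fm-results}: $\phi_{\y_s}=H^*\varphi_{\y_s}$ for some $\varphi_{\y_s}\neq 0$, whence $\|H\phi_{\y_s}\|\geq\|\phi_{\y_s}\|^2/\|\varphi_{\y_s}\|>0$; since $\|\varphi_{\y_s}\|$ may blow up as $\y_s\to\partial\Omega$, this yields only a pointwise positive bound for each fixed $\y_s$. You instead evaluate $F(0)=\tfrac{8\pi}{3}\p\neq 0$ directly from the Funk--Hecke formula, use continuity of $F$ near the origin, and combine with the uniform interior cone condition for the Lipschitz domain to get $\|H\phi_{\y_s}\|^2\geq\tfrac14|F(0)|^2|\Omega\cap B_\delta(\y_s)|\geq c_0>0$ uniformly over $\y_s\in\Omega$. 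Your route is slightly longer but buys a \emph{uniform} lower bound (and does not need the range identity at all, hence not part (d) of Theorem~\ref{fm-results}); the paper's route is shorter and reuses the factorization machinery already in place, at the cost of uniformity. Both are valid readings of the theorem's statement, yours being the stronger conclusion.
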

\begin{proof}
We first observe that
 using again   $\mathcal{F} \phi_{\y_s}(\widehat{\x})\cdot \widehat{\x}  = 0$ and the identity
$
(\widehat{\x}\times\p)\times \widehat{\x} = \p -  \widehat{\x} ( \widehat{\x} \cdot \p)
$
we have 
\begin{align*}
 \langle \mathcal{F} \phi_{\y_s}, \phi_{\y_s}  \rangle_{L^2_t(\S^2)} 
 &=   \int_{\S^2}  \mathcal{F} \phi_{\y_s}(\widehat{\x}) \cdot \big( (\widehat{\x}\times\p)\times \widehat{\x} \, e^{ik\widehat{\x} \cdot \y_s} \big)\, \d s( \widehat{\x})  \\
 &= \int_{\S^2}  \mathcal{F} \phi_{\y_s}(\widehat{\x}) \cdot  \p \, e^{ik\widehat{\x} \cdot \y_s} \, \d s( \widehat{\x}).
\end{align*}
 Therefore, Lemma~\ref{OSM} and Cauchy-Schwarz inequality deduce that 
 \begin{align*}
\left|  \langle \mathcal{F} \phi_{\y_s}, \phi_{\y_s}  \rangle_{L^2_t(\S^2)}\right|^2 \leq |\S^2|^2 \| \p \cdot \mathcal{F} \phi_{\y_s} \|^2_{L^2(\S^2)} 
=|\S^2|^2 \mathcal{I}_{OSM}(\y_s).
 \end{align*}
Here $|\S^2|$ is the surface  area of  $\S^2$. From the factorization $\mathcal{F} = H^*TH$ and the coercive property of $T$  we have that
\begin{align*}
\mathcal{I}_{OSM}(\y_s)  &\geq \frac{1}{|\S^2|^2}\left| \langle H^*TH \phi_{\y_s}, \phi_{\y_s}  \rangle_{L^2_t(\S^2)}  \right |^2 \\
& = \frac{1}{|\S^2|^2} \left| \langle TH \phi_{\y_s}, H\phi_{\y_s}  \rangle_{L^2_t(\S^2)}  \right |^2 \\
& \geq C \left\| H \phi_{\y_s} \right\|^4,
\end{align*}
where $C$ is some positive constant. Now let $\y_s \in \Omega$ then we have that $\phi_{\y_s} \in \mathrm{Range}(H^*)$  by Theorem \ref{fm-results}. Therefore,
we have 
 $\phi_{\y_s} = H^*\varphi_{\y_s}$ for some $\varphi_{\y_s} \neq 0$, and
\begin{align*}
 \big\| H \phi_{\y_s} \big\| &=  \frac{\big\| H \phi_{\y_s} \big\| \|\varphi_{\y_s}\|}{\| \varphi_{\y_s}\|}  \\
 &\geq  \frac{\langle  H \phi_{\y_s},\varphi_{\y_s} \rangle}{\| \varphi_{\y_s}\|}  \\
 & =  \frac{\langle   \phi_{\y_s},H^*\varphi_{\y_s} \rangle}{\| \varphi_{\y_s}\|}   =  \frac{\|  \phi_{\y_s}\| ^2}{\| \varphi_{\y_s}\|} >0,
\end{align*}
proving the first statement of the theorem.

We now show that the imaging functional decays as $\text{dist}(\y_s,\Omega) \to \infty$. To do so, we first observe  from Lemma~\ref{OSM}  and the factorization $\mathcal{F} = GH$ that
$$
 \mathcal{I}_{OSM}(\y_s) = \|\p \cdot GH \phi_{\y_s}\|^2_{L^2(\S^2)} \leq |\p|^2\|G\|^2 \|H \phi_{\y_s}\|^2.
$$
Therefore, we can show that $\big\| H \phi_{\y_s} \big\|^2$ satisfies the decay property and use the upper bound on the imaging functional given above. Here we let $Y^{m}_{\ell}$ denote the spherical harmonics which form a complete orthonormal system on $L^2(\S^2)$. Now, recall the Funk-Hecke formula (see for e.g. \cite{Colto2013}) 
$$ \int_{\S^2} Y^{m}_{\ell} (\di) e^{-ik\di \cdot \x} \, \d s(\di) = \frac{4\pi}{i^{\ell}} Y^{m}_{\ell} (\hat{\x}) j_\ell (k|\x|) \quad \text{ for } \quad m \in \N \cup\{0\} \,\, \text{ and } \,\, \ell = -m, \dots , m$$
where $j_\ell$ is the first kind spherical Bessel function of order $\ell$. In particular,
$$Y^0_0 = \frac{1}{\sqrt{4\pi}}, \quad j_0 (t) =  \frac{\sin(t)}{t}.$$
Just as in recent works \cite{Harri2019, Nguye2019} we will use the Funk-Hecke formula to show that $\big\| H \phi_{\y_s} \big\|^2$ decays as $\text{dist}(\y_s,\Omega) \to \infty$. 
Indeed, using the Funk-Hecke formula for $m = \ell = 0$ with the formula $\curl_\z\curl_\z (\p e^{-ik\di \cdot \z}) = - k^2 (\di \times\p)\times \di  e^{-ik\di \cdot \z}$  straightforward calculations gives
\begin{align*}
  \int_{\S^2} (\di \times\p)\times \di e^{-ik\di \cdot \z} \, \d s(\di) 
 & = -\frac{1}{k^2}\curl_\z\curl_\z \int_{\S^2}  \p e^{-ik\di \cdot \z} \, \d s(\di) \\
 &= -\frac{4\pi}{k^2}\curl_\z\curl_\z(\p j_0(k|\z|))
 = -\frac{4\pi}{k^2} \left(
\begin{array}{ccc}
v_1(\z,\mathbf{p}) \\
v_2 (\z,\mathbf{p})\\
v_3(\z,\mathbf{p})
\end{array}
\right)
\end{align*}
where, for $j = 1,2,3$, 
\begin{align*}
v_j(\z,\mathbf{p}) =  k^2\left(  \frac{ |\z|^2-(\mathbf{p}\cdot \z)z_j}{|\z|^2} \right) j_0(k|\z|) - 3\left(  \frac{(\mathbf{p}\cdot \z)z_j}{|\z|^4} \right)(\cos(k|\z|) -  j_0(k|\z|) ).
\end{align*}
It is obvious that $v_j = O(1/|\z|)$ as $|\z| \to \infty$. Therefore
\begin{align*}
(H \phi_{\y_s} ) (\x)  =  \int_{\S^2} (\di \times\p)\times \di e^{-ik\di \cdot (\y_s -\x)} \, \d s(\di) = -\frac{4\pi}{k^2} \left(
\begin{array}{ccc}
v_1(\y_s -\x,\mathbf{p}) \\
v_2 (\y_s -\x,\mathbf{p})\\
v_3(\y_s -\x,\mathbf{p})
\end{array}
\right)
\end{align*}
which leads to
$$
\| H \phi_{\y_s} \|^2 = O\left(\frac{1}{\text{dist}(\y_s,\Omega)^2} \right), \quad
\text{dist}(\y_s,\Omega) \to \infty
$$
proving the claim.
\end{proof}

This resolution analysis implies the for any sampling points $\y_s \in \Omega$ the imaging functional is strictly positive and will decay as $\y_s$ moves away from the scatterer $\Omega$. Therefore, one can plot $\mathcal{I}_{OSM}(\y_s)$ in order to recover the scatterer.  Using the imaging functional $\mathcal{I}_{OSM}(\y_s)$ has the advantage that one does not have to solve an ill-posed equation at each sampling point. Other sampling methods such as the Linear sampling method \cite{Cakon2011} and  Factorization method \cite{Kirsc2008} requires one to solve an ill-posed equation involving the far field operator at each sampling point. This requires one to compute the singular-value decomposition of the far field operator as well as applying a regularization scheme. Also stability of these methods
with respect to noise added to the data is not justified where as the orthogonality sampling method only requires one to compute an inner-product and by the following theorem we see that the  imaging functional is stable with respect to noise added to the far field data.

In many physical applications one only knows the far field data up to some `small' perturbation. Now, assume that we only know the far field operator up to a perturbation, that means  we  have access to its noisy version $\mathcal{F}_{\delta}$, 
which satisfies
$$
\| \mathcal{F} - \mathcal{F}_{\delta}\| \leq \delta \| \mathcal{F} \|,
$$
for some $\delta >0$. Here $\mathcal{F}_{\delta}$ represents the measured far field operator physical experiments. We now give a stablity estimate for the imaging functional $\mathcal{I}_{OSM}(\y_s)$.

\begin{theorem}[stability estimate]
\label{stabilityOSM} 
Denote   by $\mathcal{I}_{OSM,\delta}$ the imaging functional corresponding to noisy far field operator  $\mathcal{F}_{\delta}$. Then
\begin{align*}
\mathcal{I}_{OSM}(\y_s) - \mathcal{I}_{OSM,\delta}(\y_s) \leq  |\p|^2|\S|^2 \|\mathcal{F} \|^2 (\delta^2 + 2\delta), 
\quad \text{for all } \y_s \in \R^3,
\end{align*}
where $|\S^2|$ is again the surface area of $\S^2$, and $\p \in \R^3$ is the polarization vector in the definition of the imaging functional~\eqref{indicator_OSM}. 
\end{theorem}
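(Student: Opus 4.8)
The plan is to reduce both functionals to $L^2$-norms via Lemma~\ref{OSM} and then exploit a difference-of-squares factorization together with the hypothesis $\|\mathcal{F}-\mathcal{F}_\delta\|\le\delta\|\mathcal{F}\|$. By Lemma~\ref{OSM} I may write $\mathcal{I}_{OSM}(\y_s)=\|\p\cdot\mathcal{F}\phi_{\y_s}\|_{L^2(\S^2)}^2$ and, by the identical computation applied to the measured operator, $\mathcal{I}_{OSM,\delta}(\y_s)=\|\p\cdot\mathcal{F}_\delta\phi_{\y_s}\|_{L^2(\S^2)}^2$. The decisive point at this stage is that the test function $\phi_{\y_s}$ is the \emph{same} in both expressions, since it depends only on the sampling point and not on the data. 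Setting $a=\|\p\cdot\mathcal{F}\phi_{\y_s}\|_{L^2(\S^2)}$ and $b=\|\p\cdot\mathcal{F}_\delta\phi_{\y_s}\|_{L^2(\S^2)}$, I would write the signed difference as $\mathcal{I}_{OSM}-\mathcal{I}_{OSM,\delta}=a^2-b^2=(a-b)(a+b)$ and estimate the two factors separately.

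The key preliminary estimate, and the one that makes the bound hold uniformly for every $\y_s\in\R^3$, is a sampling-point-independent control of $\|\phi_{\y_s}\|$. Because $|e^{-ik\di\cdot\y_s}|=1$ and, by the identity already exploited in Lemma~\ref{OSM}, $|(\di\times\p)\times\di|=|\p-\di(\di\cdot\p)|\le|\p|$, one obtains $\|\phi_{\y_s}\|_{L^2_t(\S^2)}\le|\p|\,|\S^2|^{1/2}$ with a constant free of $\y_s$. Coupling this with the pointwise Cauchy--Schwarz bound $|\p\cdot\v|\le|\p|\,|\v|$ gives $a\le|\p|\,\|\mathcal{F}\phi_{\y_s}\|\le|\p|\,\|\mathcal{F}\|\,\|\phi_{\y_s}\|$, and the same bound for $b$ with $\mathcal{F}$ replaced by $\mathcal{F}_\delta$.

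For the factor $a-b$ I would apply the reverse triangle inequality followed by the noise bound: $a-b\le|a-b|\le\|\p\cdot(\mathcal{F}-\mathcal{F}_\delta)\phi_{\y_s}\|\le|\p|\,\|\mathcal{F}-\mathcal{F}_\delta\|\,\|\phi_{\y_s}\|\le\delta\,|\p|\,\|\mathcal{F}\|\,\|\phi_{\y_s}\|$. For the factor $a+b$ I would use $\|\mathcal{F}_\delta\|\le\|\mathcal{F}\|+\|\mathcal{F}-\mathcal{F}_\delta\|\le(1+\delta)\|\mathcal{F}\|$, so that $a+b$ is bounded by a factor proportional to $(2+\delta)\|\mathcal{F}\|$. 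Multiplying the two bounds and inserting $\|\phi_{\y_s}\|\le|\p|\,|\S^2|^{1/2}$ produces exactly the dependence $\delta(2+\delta)=\delta^2+2\delta$, with the remaining constant collecting the powers of $|\p|$, the surface area of $\S^2$, and $\|\mathcal{F}\|^2$.

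The calculation is routine once this structure is in place; the only genuine choice is to factor $a^2-b^2=(a-b)(a+b)$ rather than to expand $b^2=\|A-(A-B)\|^2$. The latter route would reproduce only the leading $2\delta$ term, whereas the factored form—relying on the bound $\|\mathcal{F}_\delta\|\le(1+\delta)\|\mathcal{F}\|$ in the second factor—is precisely what generates the full $\delta^2+2\delta$ dependence. I do not expect a serious obstacle beyond verifying the uniform-in-$\y_s$ estimate for $\|\phi_{\y_s}\|$, which is what guarantees validity for all $\y_s\in\R^3$ independently of $\mathrm{dist}(\y_s,\Omega)$.
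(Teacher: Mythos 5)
Your proposal is correct and follows essentially the same route as the paper: both reduce to $\|\p\cdot\mathcal{F}\phi_{\y_s}\|^2-\|\p\cdot\mathcal{F}_\delta\phi_{\y_s}\|^2$, factor the difference of squares, bound the first factor by the reverse triangle inequality and the noise hypothesis, and extract the $(2+\delta)\delta=\delta^2+2\delta$ dependence from the second factor (the paper writes $a+b\le 2a+|a-b|$ where you use $\|\mathcal{F}_\delta\|\le(1+\delta)\|\mathcal{F}\|$, which is the same estimate in different bookkeeping). The uniform-in-$\y_s$ bound $\|\phi_{\y_s}\|\le|\p|\,|\S^2|^{1/2}$ that you isolate is exactly what the paper uses implicitly.
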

\begin{proof} 
Using Lemma~\ref{OSM}, the triangle inequality and  the Cauchy-Schwarz inequality we have
\begin{align*}
&\mathcal{I}_{OSM}(\y_s) - \mathcal{I}_{OSM,\delta}(\y_s) =  \|\p \cdot \mathcal{F}\phi_{\y_s}\|^2_{L^2(\S^2)} - \|\p \cdot \mathcal{F}_\delta\phi_{\y_s}\|^2_{L^2(\S^2)} \\ 
&\leq \|\p \cdot (\mathcal{F}\phi_{\y_s} -\mathcal{F}_\delta\phi_{\y_s}) \|_{L^2(\S^2)} \left( \|\p \cdot \mathcal{F}\phi_{\y_s} \|_{L^2(\S^2)}+ \|\p \cdot \mathcal{F}_\delta\phi_{\y_s} \|_{L^2(\S^2)} \right)\\
&\leq|\p| |\S^2| \| \mathcal{F} -\mathcal{F}_\delta \| \left( 2\|\p \cdot \mathcal{F}\phi_{\y_s} \|_{L^2(\S^2)}  +  \|\p \cdot (\mathcal{F}\phi_{\y_s} -\mathcal{F}_\delta\phi_{\y_s}) \|_{L^2(\S^2)}  \right)\\
&\leq|\p| |\S^2|  \| \mathcal{F}\| \delta \left( 2|\p| |\S^2| \|\mathcal{F}\|  +|\p| |\S^2|  \| \mathcal{F}\| \delta \right)\\
&\leq |\p|^2|\S|^2 \|\mathcal{F} \|^2 (\delta^2 + 2\delta),
\end{align*}
proving the theorem.
\end{proof}


Motivated by the work in \cite{Liu2017} we prove that the imaging functionals for the OSM and DSM are equivalent. The DSM was considered in \cite{Alzaa2017} for the far field operator corresponding to the magnetic field. In their analysis they assume that the  contrast is scalar-valued and use the factorization established in \cite{Kirsc2008}. The analysis presented here is valid for a matrix-valued contrast. Moreover, we give an explicit decay rate for the imaging function. 
To this end, we again let the sampling points $\y_s$ and fixed vector $\mathbf{p} \in \R^3$, the imaging functional  for the DSM is defined as
\begin{align}
\label{indicator}
\mathcal{I}_{DSM}(\y_s) := \left| \int_{\S^2} e^{-ik\di \cdot \y_s} \int_{\S^2} \u^\infty(\widehat{\x},\di)(\di\times\p)\times \di \cdot \ol{\phi_{\y_s}(\widehat{\x})} \d s(\widehat{\x}) \d s(\di)  \right|,
\end{align}
where 
$$\phi_{\y_s}(\widehat{\x}) = (\widehat{\x} \times\p)\times \widehat{\x} \, e^{-ik\widehat{\x}\cdot \y_s}.$$
Now, similarly to Theorem~\ref{osm} we will represent the functional in terms of the far field operator. This will be used to prove the equivalence with OSM. 

\begin{lemma}\label{imaging-func}
The imaging  functional $\mathcal{I}_{DSM}$ for the DSM satisfies 
$$
\mathcal{I}_{DSM}(\y_s)  = 
\left| \langle \mathcal{F} \phi_{\y_s}, \phi_{\y_s}  \rangle_{L^2_t(\S^2)}  \right |.
$$
\end{lemma}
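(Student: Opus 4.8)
The plan is to unfold the definition \eqref{indicator} of $\mathcal{I}_{DSM}(\y_s)$ into its double-integral form and show that, after interchanging the order of integration, the quantity inside the modulus coincides exactly with $\langle \mathcal{F}\phi_{\y_s},\phi_{\y_s}\rangle_{L^2_t(\S^2)}$. The crucial algebraic observation is that the scalar factor $e^{-ik\di\cdot\y_s}$ in the definition does not depend on the integration variable $\widehat{\x}$, so it may be pulled inside the inner integral, where it combines with $(\di\times\p)\times\di$ to form precisely $\phi_{\y_s}(\di)=(\di\times\p)\times\di\,e^{-ik\di\cdot\y_s}$.

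Concretely, I would begin from \eqref{indicator} and rewrite the integrand as $\u^\infty(\widehat{\x},\di)\phi_{\y_s}(\di)\cdot\ol{\phi_{\y_s}(\widehat{\x})}$ over $\S^2\times\S^2$. Since the far field pattern matrix $\u^\infty(\widehat{\x},\di)$ is continuous, hence bounded, on $\S^2\times\S^2$ and $\phi_{\y_s}\in L^2_t(\S^2)$, the integrand is integrable on the product sphere, so Fubini's theorem applies and the $\di$-integration may be carried out first. Performing that inner integration produces $\int_{\S^2}\u^\infty(\widehat{\x},\di)\phi_{\y_s}(\di)\,\d s(\di)$, which is exactly $(\mathcal{F}\phi_{\y_s})(\widehat{\x})$ by the definition of the far field operator $\mathcal{F}$.

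The remaining outer integral pairs $(\mathcal{F}\phi_{\y_s})(\widehat{\x})$ against $\ol{\phi_{\y_s}(\widehat{\x})}$, which is precisely the $L^2_t(\S^2)$ inner product $\langle\mathcal{F}\phi_{\y_s},\phi_{\y_s}\rangle_{L^2_t(\S^2)}$; taking the modulus then yields the claimed identity. This argument is bookkeeping rather than genuine analysis, so I anticipate no substantive obstacle: the only step deserving a word of care is the interchange of the order of integration, which is immediate from the boundedness of the far field matrix together with the square-integrability of $\phi_{\y_s}$.
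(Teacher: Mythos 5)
Your proof is correct, but it takes a genuinely different (and shorter) route than the paper's. The paper proves this lemma by invoking the reciprocity relation $\u^\infty(\widehat{\x},\di) = [\u^\infty(-\di,-\widehat{\x})]^\top$, changing variables $(\widehat{\x},\di)\mapsto(-\di,-\widehat{\x})$, and then relabeling, which in effect applies a symmetry of the double integral to itself before recognizing the result as $\langle\mathcal{F}\phi_{\y_s},\phi_{\y_s}\rangle_{L^2_t(\S^2)}$. You instead observe that the scalar $e^{-ik\di\cdot\y_s}$ combines with $(\di\times\p)\times\di$ to give $\phi_{\y_s}(\di)$, and that a single application of Fubini (justified by continuity of the far field matrix on $\S^2\times\S^2$) turns the inner $\di$-integral into $(\mathcal{F}\phi_{\y_s})(\widehat{\x})$ and the outer integral into the stated inner product. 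The key structural point your approach exposes is that reciprocity is genuinely needed only in Lemma \ref{OSM}, where the modulus sits \emph{between} the two sphere integrals and forces one to swap which variable the far field operator acts on; in the DSM functional \eqref{indicator} both integrals lie inside a single modulus, so plain interchange of integration order suffices. The paper's route has the virtue of running in exact parallel with the proof of Lemma \ref{OSM}; yours is more economical and avoids an unnecessary appeal to Theorem 1. Both are valid.
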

\begin{proof} 
We use the reciprocity relation and interchange the roles of $\widehat{\x}$ and $\di$
\begin{align*}
\mathcal{I}_{DSM}(\y_s) 
& =  \left| \int_{\S^2} e^{-ik\di \cdot \y_s} \int_{\S^2} [\u^\infty(-\di,-\widehat{\x})]^\top(\di\times\p)\times \di \cdot (\widehat{\x}\times\p)\times \widehat{\x}  \, e^{ik\widehat{\x} \cdot \y_s}  \d s(\widehat{\x}) \d s(\di)  \right| \\
 &= \left| \int_{\S^2}  \int_{\S^2} \u^\infty(\di,\widehat{\x}) (\widehat{\x}\times\p)\times \widehat{\x}  \, e^{-ik\widehat{\x} \cdot \y_s}  \cdot  (\di\times\p)\times \di \, e^{ik\di \cdot \y_s}\, \d s(\widehat{\x}) \d s(\di)  \right| \\
 &=  \left| \int_{\S^2}  \int_{\S^2} \u^\infty(\widehat{\x},\di) (\di\times\p)\times \di  \, e^{-ik \di \cdot \y_s}  \d s(\di) \cdot (\widehat{\x}\times\p)\times \widehat{\x} \, e^{ik\widehat{\x} \cdot \y_s} \d s( \widehat{\x})  \right| \\
 & =\left| \langle \mathcal{F} \phi_{\y_s}, \phi_{\y_s}  \rangle_{L^2_t(\S^2)}  \right |
\end{align*}
proving the claim.
\end{proof}

From Theorem \ref{osm} and \ref{imaging-func} we have all we need to derive the equivalence of the two imaging functionals studied in this section. Again we assume that the contrast satisfies the assumptions of Theorem \ref{fm-results3}. The equivalence of the two sampling methods is proven in the following result.

\begin{theorem}
\label{equivalent}
There
exists positive constants $c_1$ and $c_2$ such that 
\begin{align*}
c_1\mathcal{I}_{OSM} \leq \mathcal{I}_{DSM} \leq c_2\sqrt{\mathcal{I}_{OSM}}.
\end{align*}
\end{theorem}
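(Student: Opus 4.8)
The plan is to establish the two inequalities separately, taking full advantage of the fact that both functionals have already been recast in terms of the far field operator: Lemma~\ref{OSM} gives $\mathcal{I}_{OSM}(\y_s) = \|\p\cdot\mathcal{F}\phi_{\y_s}\|^2_{L^2(\S^2)}$ and Lemma~\ref{imaging-func} gives $\mathcal{I}_{DSM}(\y_s) = |\langle\mathcal{F}\phi_{\y_s},\phi_{\y_s}\rangle_{L^2_t(\S^2)}|$. The quantity that ties the two together is $\|H\phi_{\y_s}\|$: I intend to squeeze both functionals between multiples of $\|H\phi_{\y_s}\|^2$, using the factorizations $\mathcal{F}=H^*TH=GH$ from Theorem~\ref{fm-results} together with the coercivity of $T$ from Theorem~\ref{fm-results3}. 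In fact both sandwich estimates appear already inside the proof of Theorem~\ref{osm}, so much of the work is to isolate and recombine them.

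For the right-hand inequality $\mathcal{I}_{DSM}\leq c_2\sqrt{\mathcal{I}_{OSM}}$, I would reuse the Cauchy--Schwarz estimate from the proof of Theorem~\ref{osm}. There, using $\mathcal{F}\phi_{\y_s}(\widehat{\x})\cdot\widehat{\x}=0$ and $(\widehat{\x}\times\p)\times\widehat{\x}=\p-\widehat{\x}(\widehat{\x}\cdot\p)$, it was shown that
\[
\left|\langle\mathcal{F}\phi_{\y_s},\phi_{\y_s}\rangle_{L^2_t(\S^2)}\right|^2 \leq |\S^2|^2\,\|\p\cdot\mathcal{F}\phi_{\y_s}\|^2_{L^2(\S^2)} = |\S^2|^2\,\mathcal{I}_{OSM}(\y_s).
\]
Taking square roots and invoking Lemma~\ref{imaging-func} gives $\mathcal{I}_{DSM}(\y_s)\leq|\S^2|\sqrt{\mathcal{I}_{OSM}(\y_s)}$, so one may take $c_2=|\S^2|$. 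This direction is essentially free.

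The left-hand inequality $c_1\mathcal{I}_{OSM}\leq\mathcal{I}_{DSM}$ carries the real content. First I would use $\mathcal{F}=H^*TH$ and the definition of the adjoint $H^*$ to write $\mathcal{I}_{DSM}(\y_s)=|\langle TH\phi_{\y_s},H\phi_{\y_s}\rangle_{L^2_t(\S^2)}|$; since $H\phi_{\y_s}\in\mathrm{Range}(H)\subset\overline{\mathrm{Range}(H)}$, the coercivity of $T$ from Theorem~\ref{fm-results3} supplies a constant $c_T>0$ with
\[
\mathcal{I}_{DSM}(\y_s)\geq c_T\,\|H\phi_{\y_s}\|^2.
\]
On the other hand, from $\mathcal{F}=GH$, the pointwise bound $|\p\cdot\mathcal{F}\phi_{\y_s}|\leq|\p|\,|\mathcal{F}\phi_{\y_s}|$, and the boundedness of $G$, I obtain the matching upper bound $\mathcal{I}_{OSM}(\y_s)\leq|\p|^2\|G\|^2\,\|H\phi_{\y_s}\|^2$, which is also recorded in the proof of Theorem~\ref{osm}. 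Eliminating $\|H\phi_{\y_s}\|^2$ between the two estimates yields $\mathcal{I}_{OSM}(\y_s)\leq\bigl(|\p|^2\|G\|^2/c_T\bigr)\,\mathcal{I}_{DSM}(\y_s)$, so $c_1=c_T/(|\p|^2\|G\|^2)$ works.

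The main obstacle is the lower bound, and it rests entirely on the coercivity of the middle operator $T$ on $\overline{\mathrm{Range}(H)}$. This is precisely where the definiteness hypotheses on the contrast $P$ in Theorem~\ref{fm-results3} and the assumption that $k$ is not a transmission eigenvalue enter: without coercivity one has only the trivial bound $\mathcal{I}_{DSM}\leq\|\mathcal{F}\|\,\|\phi_{\y_s}\|^2$ and no mechanism to control $\|H\phi_{\y_s}\|$ from below by $\mathcal{I}_{DSM}$. Everything else in the argument is Cauchy--Schwarz and operator-norm bookkeeping, so once the coercive estimate is in hand the equivalence follows immediately.
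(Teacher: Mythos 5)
Your proposal is correct and follows essentially the same route as the paper: the upper bound comes from the Cauchy--Schwarz estimate $|\langle\mathcal{F}\phi_{\y_s},\phi_{\y_s}\rangle|^2\leq|\S^2|^2\mathcal{I}_{OSM}$ already established in the proof of Theorem~\ref{osm}, and the lower bound comes from sandwiching both functionals against $\|H\phi_{\y_s}\|^2$ via the coercivity of $T$ and the bound $\mathcal{I}_{OSM}\leq|\p|^2\|G\|^2\|H\phi_{\y_s}\|^2$. Your version is slightly more explicit about the constants, but the argument is the same.
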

\begin{proof}
From Lemma~\ref{imaging-func}, the factorization $\mathcal{F} = H^*TH$ and the coercive property of $T$ we obtain 
$$
 \mathcal{I}_{DSM}(\y_s)  \geq C \big\| H \phi_{\y_s} \big\|^2 
$$
where $C$ is some positive constant. The first inequality of the theorem hence follows from the estimate
$$
 \mathcal{I}_{OSM}(\y_s) = \|\p \cdot GH \phi_{\y_s}\|^2_{L^2(\S^2)} \leq |\p|^2\|G\|^2 \|H \phi_{\y_s}\|^2.
$$
The following estimate is from the beginning of the proof of Theorem~\ref{osm}
 \begin{align*}
\left|  \langle \mathcal{F} \phi_{\y_s}, \phi_{\y_s}  \rangle_{L^2_t(\S^2)}\right|^2 \leq |\S^2|^2 \| \p \cdot \mathcal{F} \phi_{\y_s} \|^2_{L^2(\S^2)} 
=|\S^2|^2 \mathcal{I}_{OSM}(\y_s).
 \end{align*}
This estimate and  Lemma~\ref{imaging-func} implies the second inequality of the theorem.
\end{proof}

Following the method for proving Theorem~\ref{osm} we can obtain similar resolution analysis for the direct sampling method.
\begin{theorem}\label{dsm}
For every $\y_s \in \Omega$  
the imaging functional $\mathcal{I}_{DSM}(\y_s)$ is bounded from below by a  positive constant. 
 Moreover, for $\y_s \notin \Omega$ the imaging functional satisfies 
 $$
 \mathcal{I}_{DSM}(\y_s) = O\left( \frac{1}{\mathrm{dist}(\y_s,\Omega)^2} \right), \quad  \text{as } \mathrm{dist}(\y_s,\Omega) \to \infty.
 $$
\end{theorem}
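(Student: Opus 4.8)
The plan is to reduce both assertions to the behavior of $\|H\phi_{\y_s}\|$, which is exactly the quantity already controlled in the proof of Theorem~\ref{osm}. The starting point is Lemma~\ref{imaging-func}, giving $\mathcal{I}_{DSM}(\y_s) = |\langle \mathcal{F}\phi_{\y_s}, \phi_{\y_s}\rangle_{L^2_t(\S^2)}|$, combined with the factorization $\mathcal{F}=H^*TH$ from Theorem~\ref{fm-results}. Moving $H^*$ across the inner product yields
$$
\mathcal{I}_{DSM}(\y_s) = \left|\langle TH\phi_{\y_s}, H\phi_{\y_s}\rangle_{L^2_t(\S^2)}\right|,
$$
so the functional is pinched between a coercivity lower bound and a Cauchy--Schwarz upper bound for the sesquilinear form of $T$ evaluated at $H\phi_{\y_s}$.

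For the lower bound (points inside $\Omega$) I would invoke the coercivity of $T$ on $\overline{\mathrm{Range}(H)}$ from Theorem~\ref{fm-results3}, which gives $\mathcal{I}_{DSM}(\y_s) \geq C\|H\phi_{\y_s}\|^2$ for a positive constant $C$; this is precisely the inequality already recorded at the start of the proof of Theorem~\ref{equivalent}. It then remains to show $\|H\phi_{\y_s}\| > 0$ for $\y_s \in \Omega$, which follows exactly as in Theorem~\ref{osm}: by part (d) of Theorem~\ref{fm-results} one writes $\phi_{\y_s}=H^*\varphi_{\y_s}$ for some $\varphi_{\y_s}\neq 0$, whence $\|H\phi_{\y_s}\| \geq \langle H\phi_{\y_s}, \varphi_{\y_s}\rangle/\|\varphi_{\y_s}\| = \|\phi_{\y_s}\|^2/\|\varphi_{\y_s}\| > 0$.

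For the decay (points outside $\Omega$), the key observation is that the equivalence in Theorem~\ref{equivalent} is too lossy here: the bound $\mathcal{I}_{DSM} \leq c_2\sqrt{\mathcal{I}_{OSM}}$ together with $\mathcal{I}_{OSM} = O(\mathrm{dist}(\y_s,\Omega)^{-2})$ would only yield $O(\mathrm{dist}(\y_s,\Omega)^{-1})$. Instead I would estimate directly from the factored form: by Cauchy--Schwarz and the boundedness of $T$,
$$
\mathcal{I}_{DSM}(\y_s) = \left|\langle TH\phi_{\y_s}, H\phi_{\y_s}\rangle_{L^2_t(\S^2)}\right| \leq \|T\|\,\|H\phi_{\y_s}\|^2.
$$
The claimed rate is then immediate from the Funk--Hecke computation in the proof of Theorem~\ref{osm}, which already establishes $\|H\phi_{\y_s}\|^2 = O(\mathrm{dist}(\y_s,\Omega)^{-2})$ as $\mathrm{dist}(\y_s,\Omega)\to\infty$; no new estimate is needed.

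The only place where the argument genuinely differs from a mechanical copy of Theorem~\ref{osm} is the exponent bookkeeping, and this is the main point to get right: because $\mathcal{I}_{DSM}$ is a single inner product while $\mathcal{I}_{OSM}$ is (up to Cauchy--Schwarz) its square, the lower bound here scales like $\|H\phi_{\y_s}\|^2$ rather than $\|H\phi_{\y_s}\|^4$, and the upper bound scales the same way. Thus $\mathcal{I}_{DSM}$ is comparable to $\|H\phi_{\y_s}\|^2$ from both sides, which is exactly what makes the sharp $O(\mathrm{dist}(\y_s,\Omega)^{-2})$ decay drop out by routing through the factorization directly rather than through Theorem~\ref{equivalent}.
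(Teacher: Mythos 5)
Your proposal is correct and is precisely the argument the paper intends: the paper omits the proof, stating only that it follows the method of Theorem~\ref{osm}, and your reduction of both claims to $\|H\phi_{\y_s}\|^2$ via the factorization $\mathcal{F}=H^*TH$ --- coercivity of $T$ for the lower bound, $|\langle TH\phi_{\y_s},H\phi_{\y_s}\rangle|\le\|T\|\,\|H\phi_{\y_s}\|^2$ together with the Funk--Hecke estimate for the decay --- is exactly that method. Your observation that routing through Theorem~\ref{equivalent} would only give the lossy rate $O(\mathrm{dist}(\y_s,\Omega)^{-1})$ is apt and correctly identifies why the direct estimate is needed.
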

We also have the stability of the DSM that can be proved using the triangle  and the Cauchy-Schwarz inequalities. The analysis in this section allows one to solve the 
inverse shape problem for electromagnetic scattering by plotting either $\mathcal{I}_{DSM}$ or $\mathcal{I}_{OSM}$. This amounts to a fast yet stable reconstruction method that only relies on the knowledge of the far-field data. Unlike some traditional sampling methods one does not need to minimize a non-linear functional at each sampling point.

\section{Numerical examples}
\label{imaging}
We present in this section several numerical examples to validate the performance of the 
OSM as well as the DSM. The simulations were carried on a Quad Core 3.6 GHz machine with 
 32GB RAM and the implementation was done using the computing software Matlab. 
 The synthetic data are generated by numerically solving the direct problem with 
 the spectral solver studied in~\cite{Nguye2019}. We solve the direct problem~\eqref{eq:secondOrder}--\eqref{eq:radiationCond}
 with incident field 
 $$
 \Ei(\x,\di_j, \q)=ik (\di_j\times \p) \times  \di_j e^{ik\x\cdot\di_j}, \quad  j = 1,2,\dots,N_\di 
 $$
 where   $\p = (1/\sqrt{3},-1/\sqrt{3},1/\sqrt{3})^\top$ 
 and $N_\di$ is the number of directions $\di_j$ that is specified below for each numerical example.  
 Likewise, we denote by $N_{\widehat{\x}}$ the number of points ${\widehat{\x}} \in \S^2$  where
the  far field pattern data are collected. The points   $\widehat{\x}$ and $\di$ that are chosen 
for generating the scattering data are almost uniformly distributed on $\S^2$.
Let 
 $$\mathcal{D}(\widehat{\x},\di) = \u^\infty(\widehat{\x},\di)(\di\times\p)\times \di$$ 
 be our synthetic far field data
 that has three components $\mathcal{D}_n, n = 1,2,3,$ and $\mathcal{D}_n$ 
 can be considered as an $N_{\widehat{\x}}\times N_\di$  matrix. 
To consider noisy data, we add artificial noise to  our synthetic data. 
More precisely, a complex-valued noise matrix $\mathcal{N}$ 
containing random numbers that are uniformly distributed in the complex square 
$$\{ a+\i b\, : \, |a| \leq 1, \, |b| \leq 1 \} \subset \C$$
 is added to the data matrix 
$\mathcal{D}_n$. Denoting by $\delta$ the noise level, the
noisy data matrix $\mathcal{D}_{n,\delta}$ is then given by
\[
 \mathcal{D}_{n,\delta}
 := \mathcal{D}_{n}
   + \delta\frac{\mathcal{N}}{\|\mathcal{N}\|_2} \left\|\mathcal{D}_{n} \right\|_2, \quad n = 1,2,3,
\]
where $\|\cdot\|_2$ is the matrix 2-norm. 
To define the anisotropic contrasts for the numerical examples, we
need the following diagonal matrix 
\begin{align}
\label{matrix}
A =
\left[\begin{array}{ccccc}
 1  &  0 &    0\\  
0 &   1.5 &   0\\ 
  0 &  0 &   1.2
\end{array}
\right].
\end{align}

\subsection{OSM and DSM  (Figure~\ref{fi1}).} 
We present in Figure~\ref{fi1} the reconstruction results of the OSM and DSM 
for an anisotropic scatterer including three different balls.  The scatterer is characterized 
by smoothly varying contrast $P(\x)$ defined by
\begin{align}
\Omega_1 &= \left\{\x \in \R^3: |\x - \mathbf{a}|^2 < 0.3^2, \mathbf{a} = (0.4,0,-0.45)^\top \right \} \nonumber \\
\Omega_2 &= \left\{\x \in \R^3: |\x - \mathbf{b}|^2 < 0.35^2, \mathbf{b} = (-0.4,0,0)^\top \right \} \nonumber\\
\Omega_3 &= \left\{\x \in \R^3: |\x - \mathbf{c}|^2 < 0.4^2, \mathbf{c}= (0.4,0,0.4)^\top \right \}\nonumber \\
P(\x) &= \begin{cases} 
A\exp\left(1 - \frac{0.3^2}{0.3^2 - |\x - \mathbf{a}|^2} \right), \quad & \x \in \Omega_1 \\
A\exp\left(1 - \frac{0.35^2}{0.35^2 - |\x - \mathbf{b}|^2} \right), \quad & \x \in \Omega_2 \\
A\exp\left(1 - \frac{0.4^2}{0.4^2 - |\x - \mathbf{c}|^2} \right), \quad & \x \in \Omega_3 \\
  0, \quad &  \text{else},
    \end{cases}
    \label{contrast}
\end{align}
where $A$ is given by~\eqref{matrix}. 

Here we choose wave number $k = 12$ (the corresponding wavelength is about 0.52)  
and $N_{\widehat{\x}}\times N_\di = 325\times 325$.  The far field data are perturbed by 30$\%$ noise.
In Figure~\ref{fi1} we also present 3D-visualizations of the exact geometry and its reconstructions using isosurface in Matlab.
The isovalue for  the isosurface plotting is chosen to be $1/3$ of the maximal value of the computed imaging functionals
 $\mathcal{I}_{OSM}$ and $\mathcal{I}_{DSM}$. It can be seen in Figure~\ref{fi1}  that both the OSM and the DSM are robust with noise added
 to the data and provide reasonable reconstructions
 of the scatterer. We also observe   that the reconstruction from $\mathcal{I}_{OSM}$ is more similar to that from $(\mathcal{I}_{DSM})^2$ 
 rather than the one from  $\mathcal{I}_{DSM}$. This  might reflect the estimate $\mathcal{I}_{DSM} \leq c_2 \sqrt{\mathcal{I}_{OSM}}$
 from the equivalent relation in Theorem~\ref{equivalent}.

\begin{figure}[ht!]
\centering
\subfloat[]{\includegraphics[width=3.9cm]{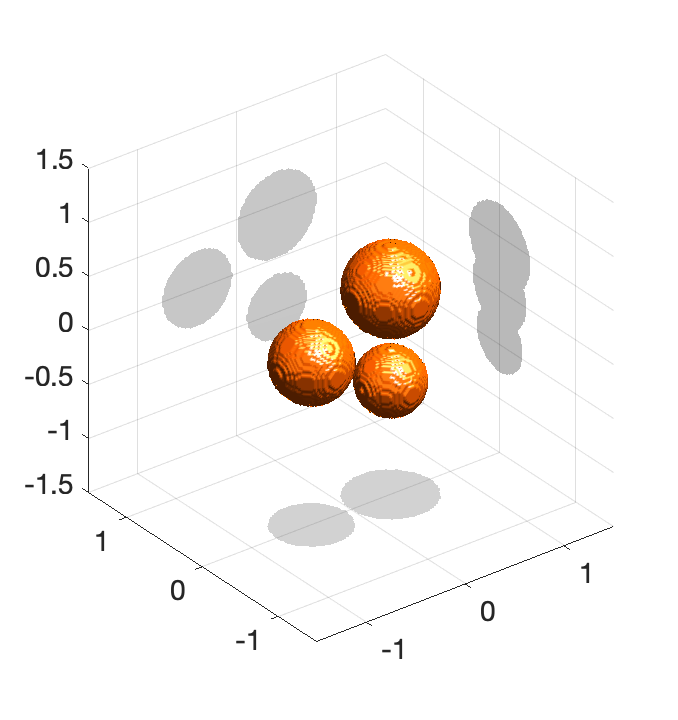}} \hspace{-0.25cm}
\subfloat[]{\includegraphics[width=3.9cm]{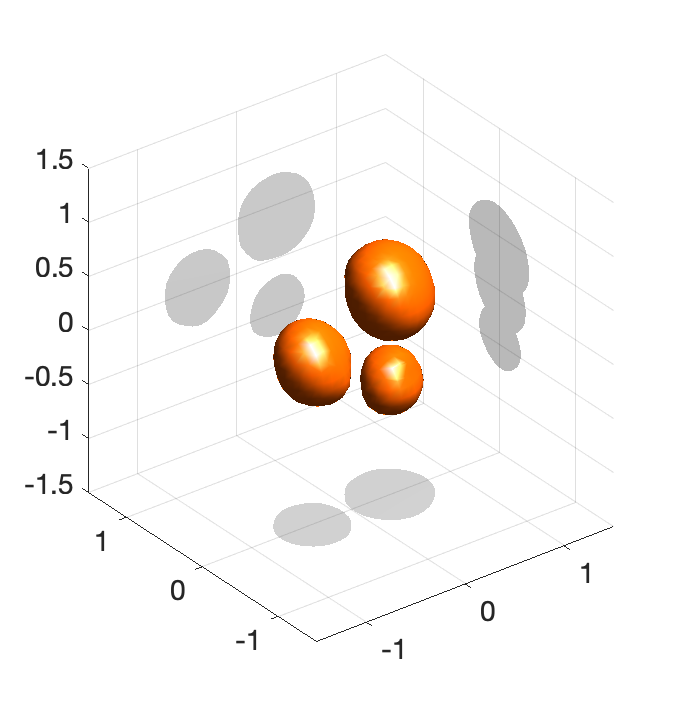}}  \hspace{-0.25cm}
\subfloat[]{\includegraphics[width=3.9cm]{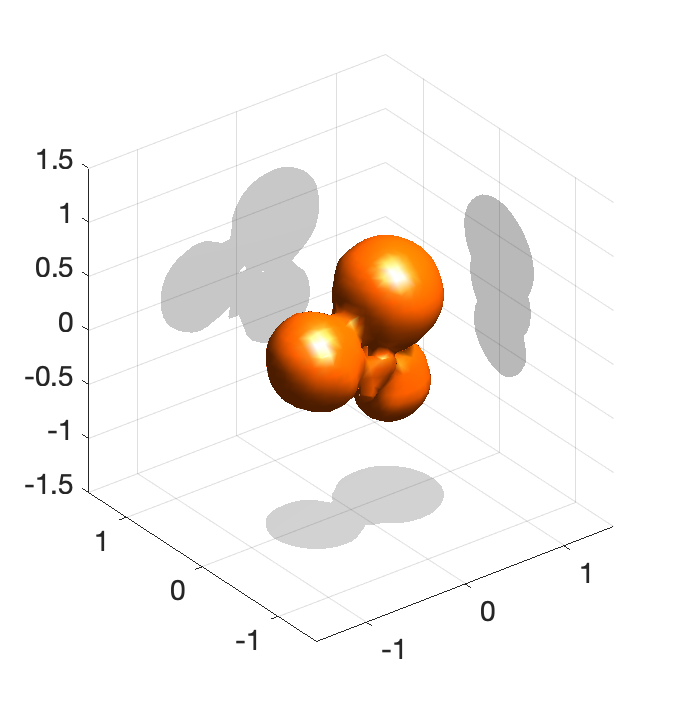}}  \hspace{-0.25cm}
\subfloat[]{\includegraphics[width=3.9cm]{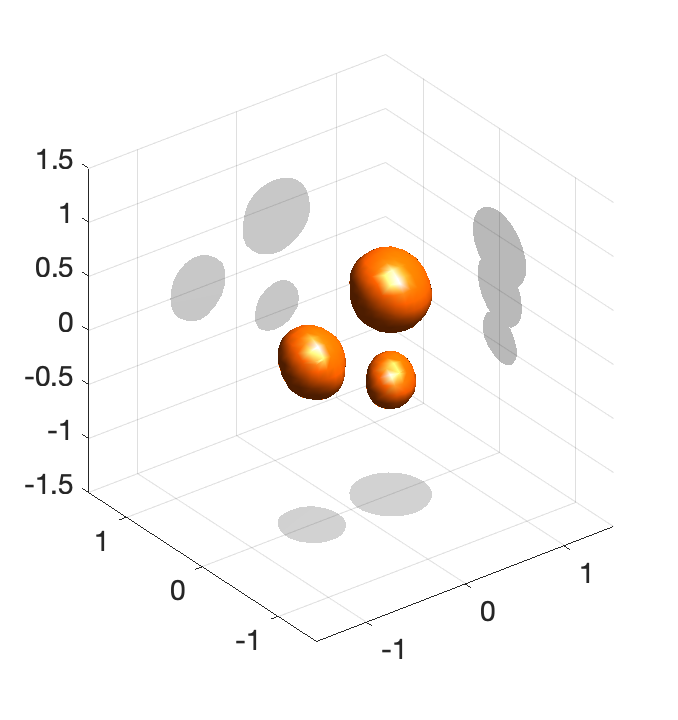}} \\
\subfloat[]{\includegraphics[width=3.8cm]{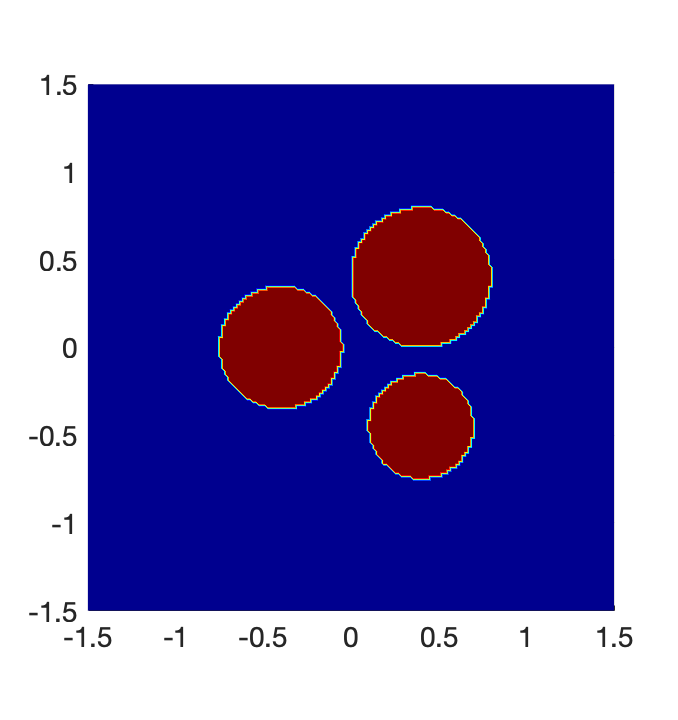}} \hspace{-0.15cm}
\subfloat[]{\includegraphics[width=3.8cm]{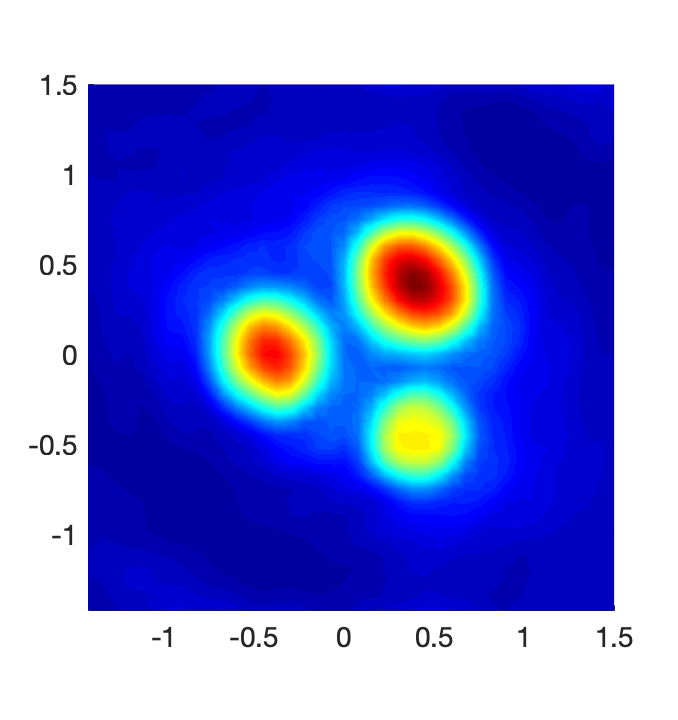}}  \hspace{-0.15cm}
\subfloat[]{\includegraphics[width=3.8cm]{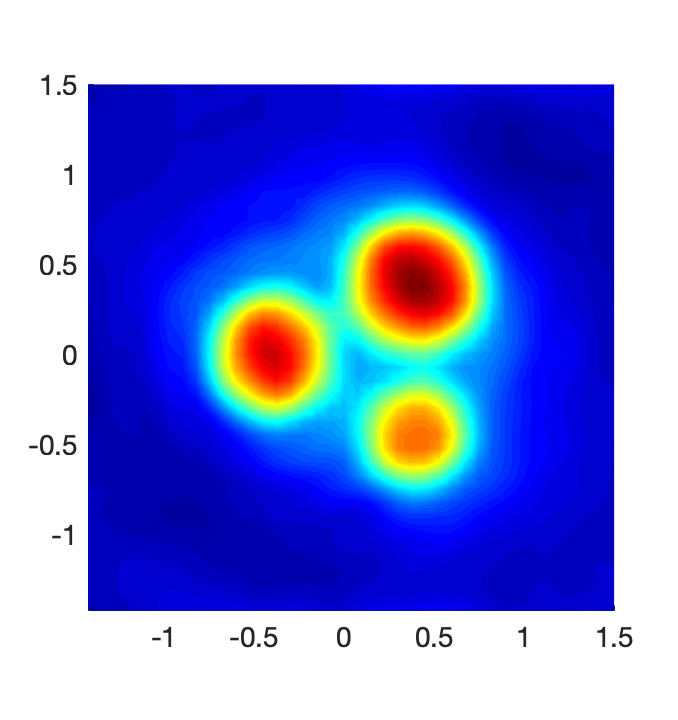}}  \hspace{-0.15cm}
\subfloat[]{\includegraphics[width=3.8cm]{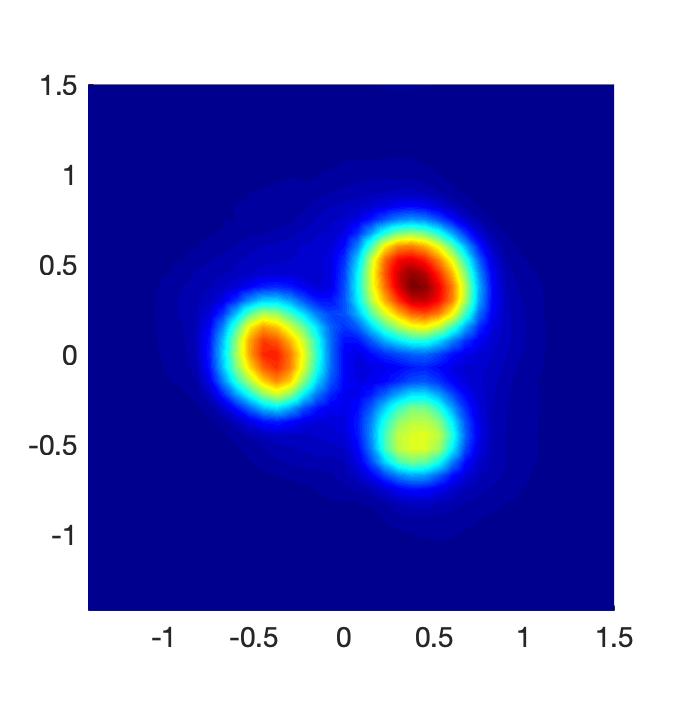}}  
\caption{Reconstruction  with OSM and DSM for the scatterer including three different balls. 
 $N_{\widehat{\x}}\times N_\di = 325\times 325$, wave number $k=12$. There is  30$\%$ noise added to the data.
 (a) Exact geometry. 
(b) Reconstruction with $\mathcal{I}_{OSM}$. (c) Reconstruction with $\mathcal{I}_{DSM}$. 
(d) Reconstruction with $(\mathcal{I}_{DSM})^2$.
(e) Cross-sectional view of the exact geometry. (f) Cross-sectional view of  the computed $\mathcal{I}_{OSM}$. 
(g) Cross-sectional view of the computed $\mathcal{I}_{DSM}$. (h) Cross-sectional view of the computed $(\mathcal{I}_{DSM})^2$.
The isovalue in  the isosurface plotting is $1/3$ of the maximal value of the computed imaging functionals.} 
\label{fi1}
\end{figure}

\subsection{OSM for highly noisy data  (Figure~\ref{fi2}).} 
The second example is presented in Figure~\ref{fi2} where we focus on 
performance of the sampling  method on  data perturbed by high 
amounts of noise.

Here consider the scatterer as in the previous example~\eqref{contrast},  $k = 12$ 
and $N_{\widehat{\x}}\times N_\di = 325\times 325$. 
From Figure~\ref{fi2} we can see that,  even there are high  levels of noise $\delta = 0.6$ and $0.9$
in the far field data,  that the OSM is still able to provide reasonable reconstructions. 
The computed images are not very different for 60$\%$  and 90$\%$ noise in the data.  
The solid performance of the method on noisy data can be justified by 
the stability of the method that is discussed in Theorem~\ref{stabilityOSM}.

\begin{figure}[ht!!]
\centering
\subfloat[]{\includegraphics[width=5cm]{3exp_Exact.png}} \hspace{0cm}
\subfloat[]{\includegraphics[width=5cm]{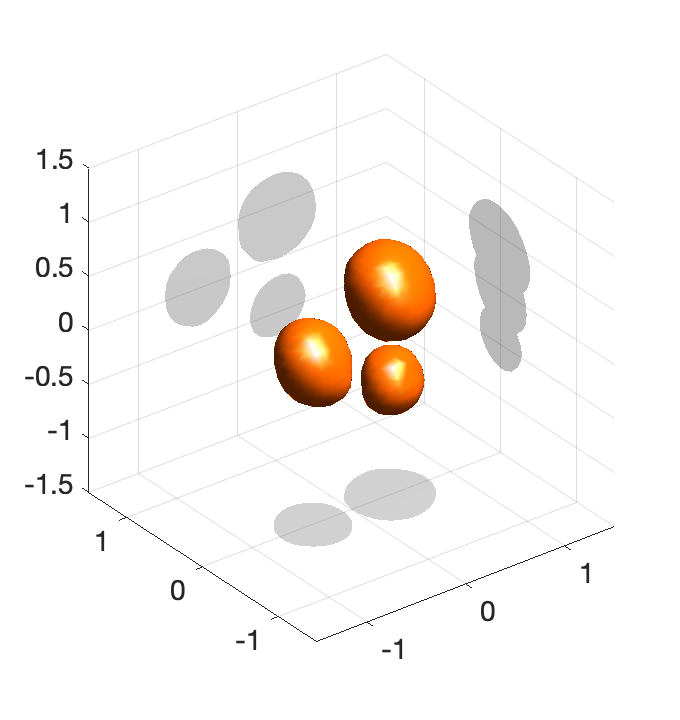}}  \hspace{0cm}
\subfloat[]{\includegraphics[width=5cm]{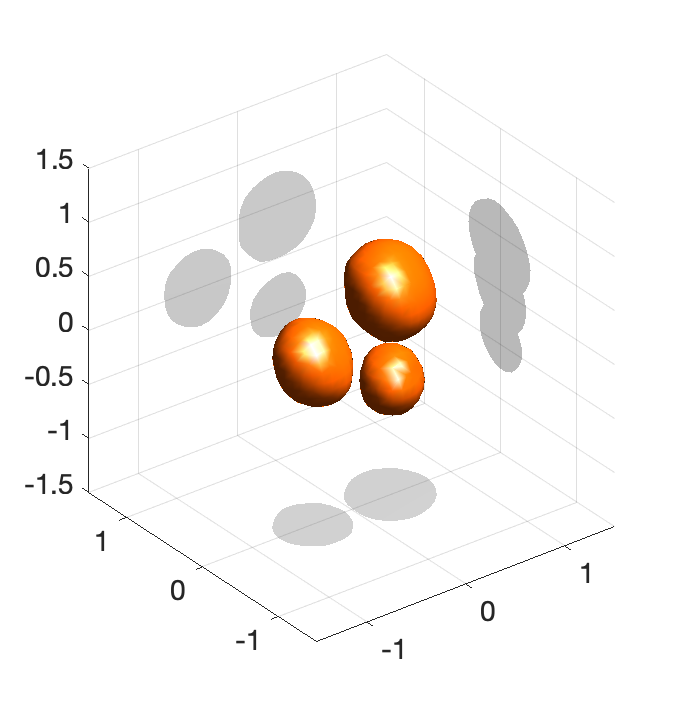}} \\
\subfloat[]{\includegraphics[width=4.5cm]{3exp_exact2D.png}} \hspace{0.3cm}
\subfloat[]{\includegraphics[width=4.5cm]{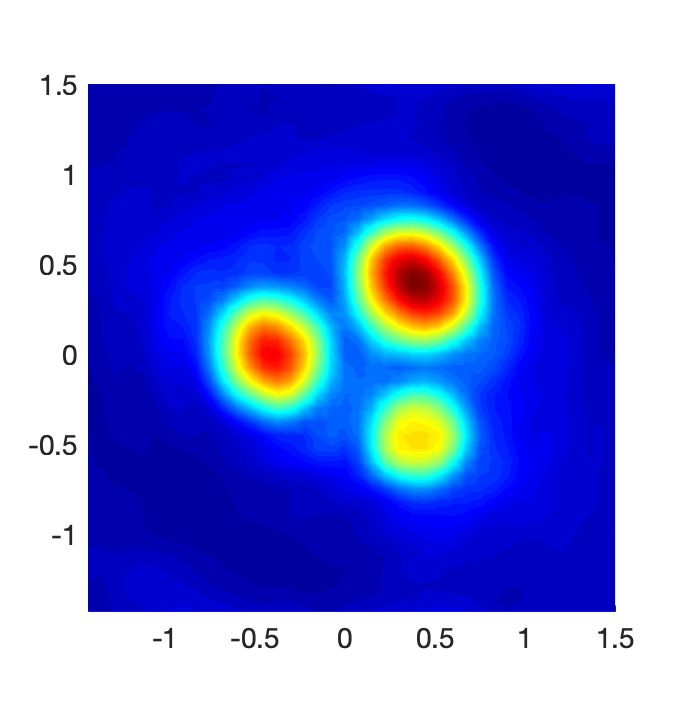}}  \hspace{0.3cm}
\subfloat[]{\includegraphics[width=4.5cm]{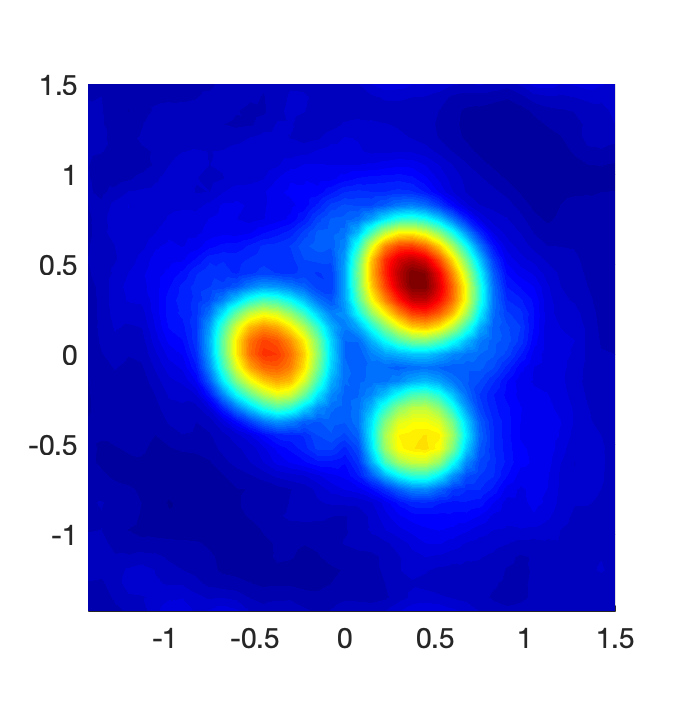}}  
\caption{Reconstruction  with highly noisy data using OSM. 
 $N_{\widehat{\x}}\times N_\di = 325\times 325$, wave number $k=12$. (a) Exact geometry. 
(b) Reconstruction for 60$\%$ noise. (c) Reconstruction for 90$\%$ noise. 
(d) Cross-sectional view of the exact geometry. (e) Cross-sectional view of the computed  
 $\mathcal{I}_{OSM}$ for 60$\%$ noise. 
(f) Cross-sectional view of the computed $\mathcal{I}_{OSM}$ for 90$\%$ noise.
The isovalue in  the isosurface plotting is $1/3$ of the maximal value of the computed $\mathcal{I}_{OSM}$.} 
\label{fi2}
\end{figure}

\subsection{OSM with less data (Figure~\ref{fi3}).} 
This is the focus of the third example that is to examine the performance of the OSM
on a smaller wave number and a smaller set of scattering data. We consider the same scatterer
as in the first and second examples~\eqref{contrast}. The data in this example are perturbed by 30$\%$ noise.
 We can see in Figure~\ref{fi3}(b) that for
 $N_{\widehat{\x}}\times N_\di = 325\times 325$ and wave number
$k = 6$ (wavelength is about 1.04) the reconstruction  is not as good as that of the case $k = 12$
(Figure~\ref{fi3}(c)). Although we can see two components of the scatterers in the reconstruction
  the shape and locations are not very accurate. 
The result is even worse if we have less data, see Figure~\ref{fi3}(d). More precisely, for  
$N_{\widehat{\x}}\times N_\di = 91 \times 91$ and $k = 12$, the reconstruction 
is no longer reasonable.

\begin{figure}[htt]
\centering
\subfloat[]{\includegraphics[width=4.5cm]{3exp_exact2D.png}} \hspace{0.3cm}
\subfloat[]{\includegraphics[width=4.5cm]{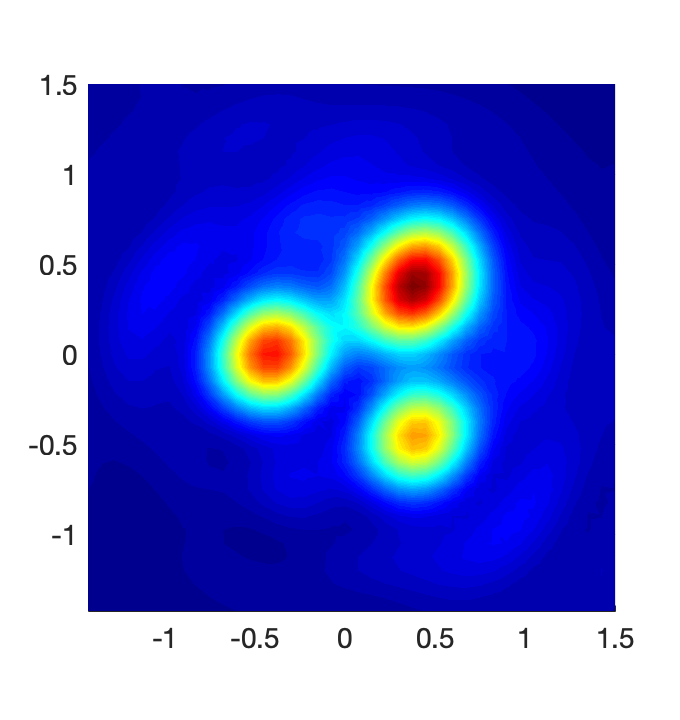}}  \hspace{0.3cm} \\
\subfloat[]{\includegraphics[width=4.5cm]{3exp_comput2D_OSM.png}}  \hspace{0.3cm}
\subfloat[]{\includegraphics[width=4.5cm]{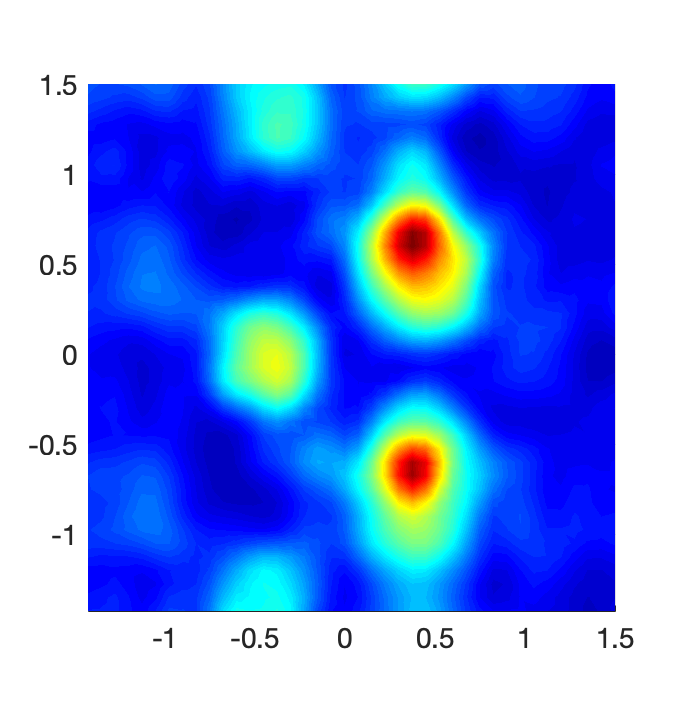}}  
\caption{Reconstruction  with a smaller wave number  and  a smaller amount of scattering data
in cross-sectional views of the computed  
 $\mathcal{I}_{OSM}$.  
There is  30$\%$ noise added to the data.
(a) Exact geometry. (b) 
 $N_{\widehat{\x}}\times N_\di = 325\times 325$ and $k = 6$. (c)
$N_{\widehat{\x}}\times N_\di = 325\times 325$ and  $k =12$. (d) 
 $N_{\widehat{\x}}\times N_\di = 91\times 91$ and  $k =12$.
 } 
 \label{fi3}
\end{figure}

\subsection{OSM for other types of scatterers (Figure~\ref{fi4}).} 
In this example we present in Figure~\ref{fi4} the  reconstruction results for  three different types 
of scatterers. We consider scatterers with  more complicated shapes and non-smooth  geometries.  
For the scatterer in Figure~\ref{fi4}(a) the anisotropic contrast $P(\x)$ is again a smoothly varying function defined by
\begin{align*}
D_1 &= \left\{\x \in \R^3: |\x - \mathbf{a}|^2 < 0.3^2, \mathbf{a} = (0,0,0.4)^\top \right \} \\
D_2 &= \left\{\x \in \R^3: |\x - \mathbf{b}|^2 < 0.5^2, \mathbf{b} = (0,0,-0.3)^\top \right \} \\
P(\x) &= \begin{cases} 
\frac{A}{2}\exp\left(1 - \frac{0.3^2}{0.3^2 - |\x - \mathbf{a}|^2} \right), \quad & \x \in D_1 \\
\frac{A}{2}\exp\left(1 - \frac{0.5^2}{0.5^2 - |\x - \mathbf{b}|^2} \right), \quad & \x \in D_2 \\
  0, \quad &  \text{else},
    \end{cases}
\end{align*}
where matrix $A$ is given by~\eqref{matrix}.
The contrast $P(\x)$ for the scatterers in Figures~\ref{fi4}(d) and~\ref{fi4}(g) are respectively equal to $A/2$  
and $A/4$ in $\Omega$ and 0 outside of $\Omega$.  Again, $N_{\widehat{\x}}\times N_\di = 325\times 325$, $k = 12$
and the data  are  perturbed by 30$\%$ noise. 
The pictures show that with the right amount of data the sampling method
is able to provide  good  reconstruction results for scatterers 
with more complicated shapes. We also observe that
for scatterers with non-smooth geometries like in  Figures~\ref{fi4}(d)  and~\ref{fi4}(g),
the isovalue in  the isosurface plotting should be  1/2 of the maximal value of the 
computed imaging functional $\mathcal{I}_{OSM}$ to give a better three-dimensional image.

\begin{figure}[ht!]
\centering
\subfloat[]{\includegraphics[width=5cm]{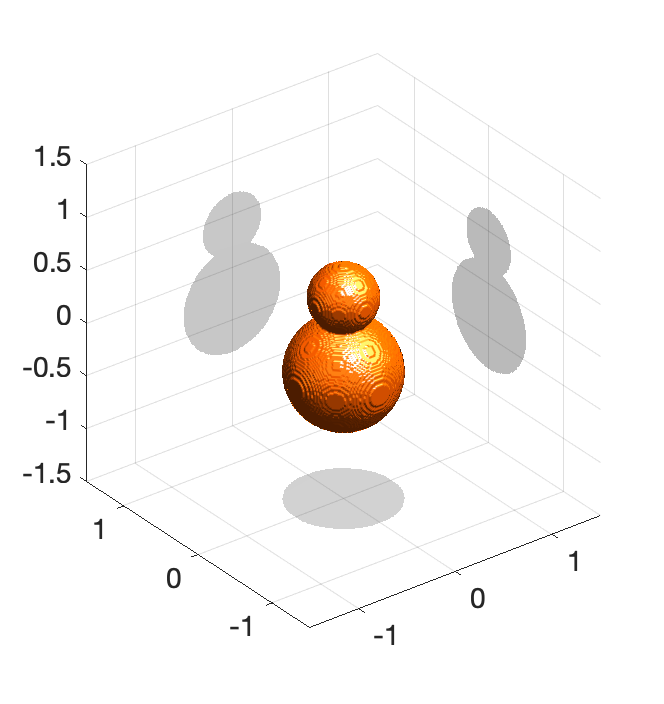}} \hspace{0.cm}
\subfloat[]{\includegraphics[width=5cm]{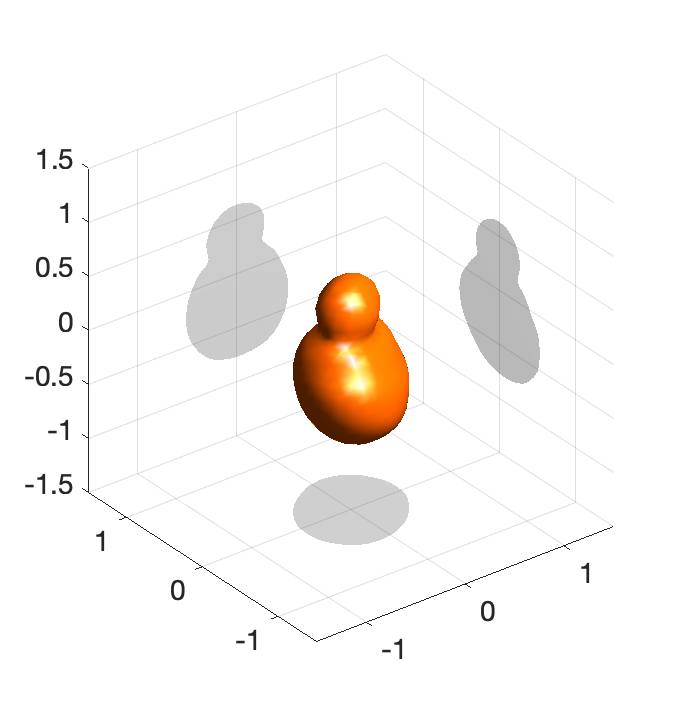}}  \hspace{0.cm}
\subfloat[]{\includegraphics[width=4.5cm]{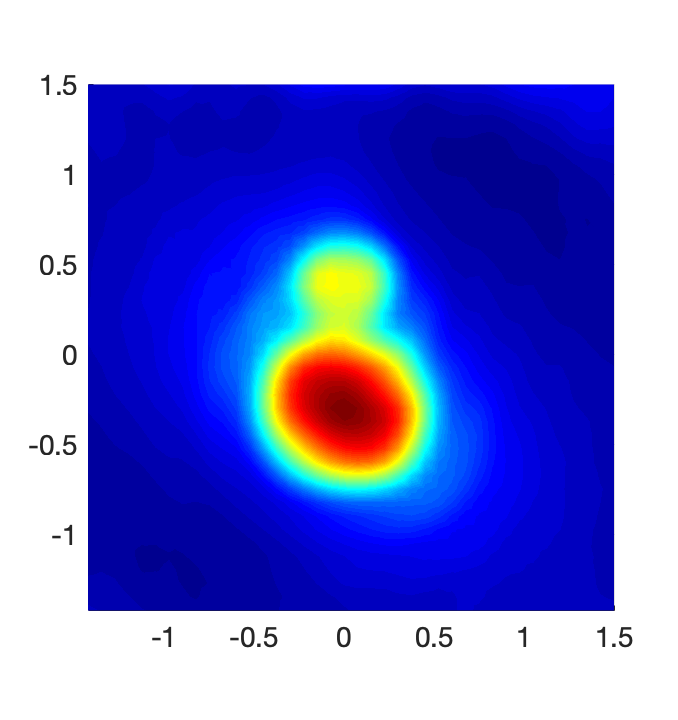}} \hspace{0.cm}
\subfloat[]{\includegraphics[width=5cm]{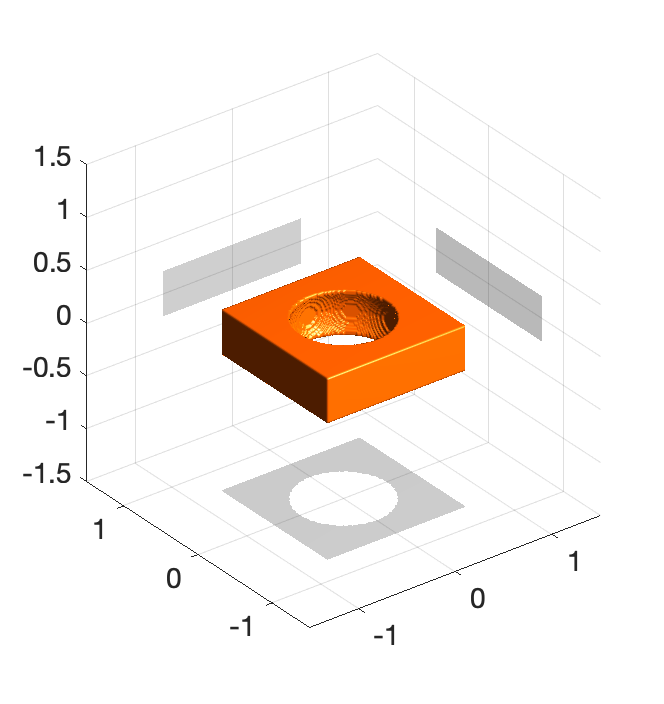}} \hspace{0.cm}
\subfloat[]{\includegraphics[width=5cm]{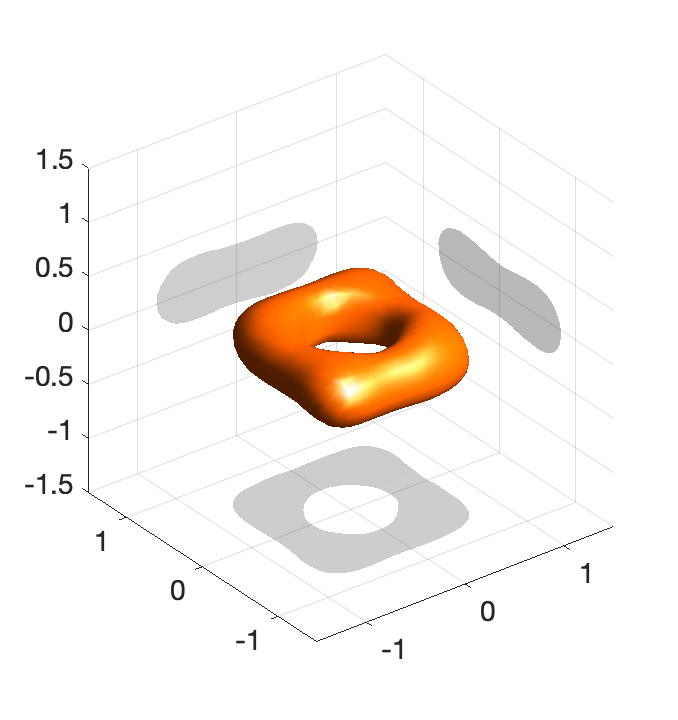}}  \hspace{0.cm}
\subfloat[]{\includegraphics[width=4.5cm]{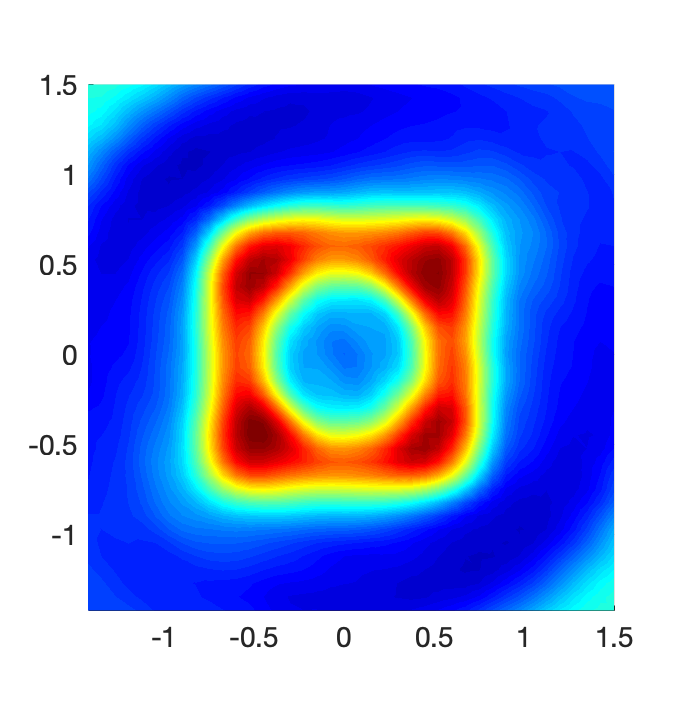}} \hspace{0.cm}
\subfloat[]{\includegraphics[width=5cm]{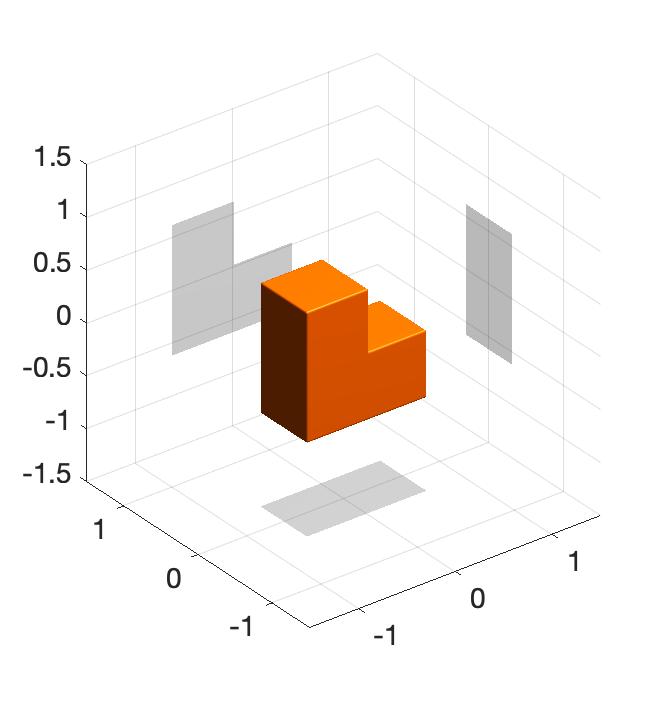}} 
\vspace{-0.cm}
\subfloat[]{\includegraphics[width=5cm]{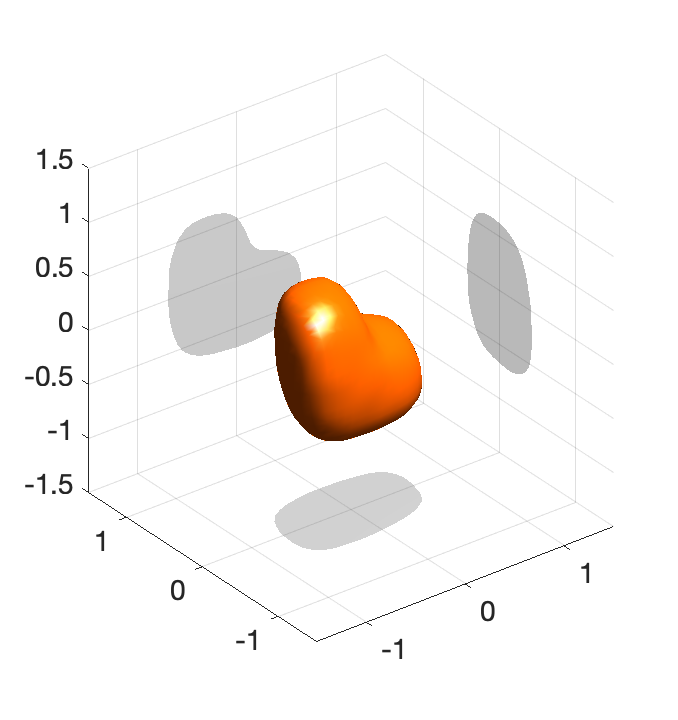}} 
\vspace{-0.cm}
\subfloat[]{\includegraphics[width=4.5cm]{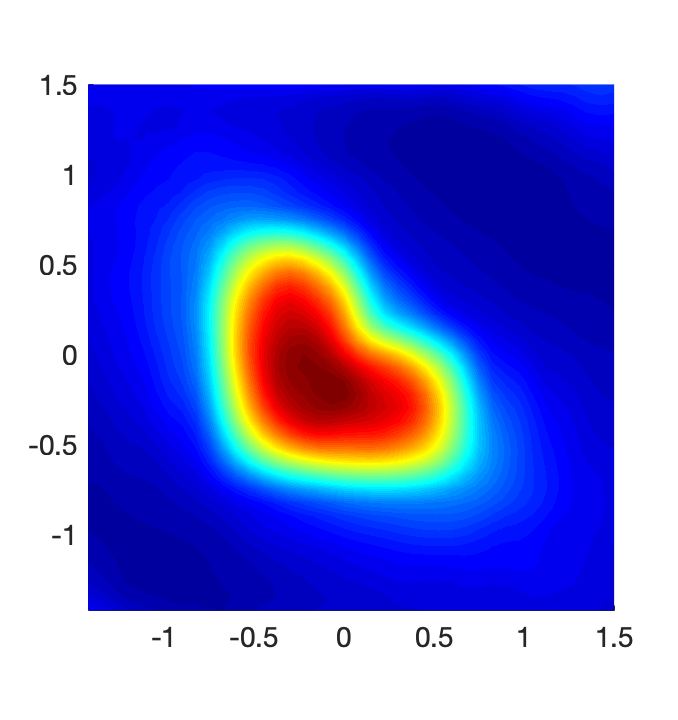}} 
\caption{Reconstruction of scatterers with more complicated shapes using OSM.  There is 
30$\%$ noise in the scattering data,  $N_{\widehat{\x}}\times N_\di = 325\times 325$ and $k = 12$.  
The first column is the exact geometries and the second column is
 the corresponding reconstructions.  
Cross-sectional views of the computed $\mathcal{I}_{OSM}$ are displayed in the last column. 
The isovalues in  the isosurface plotting are respectively $1/3$ and $1/2$ of the maximal value of the computed 
 $\mathcal{I}_{OSM}$ for (b) and (e, h).
}
\label{fi4}
\end{figure}

\section{Summary}
We study the OSM for solving the  electromagnetic  inverse scattering problem for  anisotropic media with far field data. 
We propose an imaging functional for the OSM which is able to  compute the location and shape of 
electromagnetic scatterers in a fast and robust way. Using tools of the Factorization method analysis and 
the Funk-Hecke formula 
we are able to establish a rigorous justification and resolution analysis for 
the proposed imaging functional. We also prove that this  functional is equivalent to that of the DSM 
and that our resolution analysis for the OSM can be directly applied to the DSM.  Numerical results 
for three-dimensional anisotropic scatterers are presented. Together with our recent work 
in~\cite{Nguye2019}, where the OSM is justified for the electromagnetic inverse scattering with one incident plane wave, 
we have provided a  versatile approach for solving the electromagnetic inverse scattering problem. \\

{\bf Acknowledgement}. The work of DLN is partially supported by NSF grant DMS-1812693.

\end{document}